\documentclass[11pt, a4paper]{article}
\usepackage[a4paper, total={6.5in, 8.5in}]{geometry}
\usepackage[utf8]{inputenc} 
\usepackage[T1]{fontenc}
\usepackage[dvipsnames]{xcolor}
\usepackage{amsthm, amsmath, amssymb, amsfonts, graphicx, geometry, lipsum, amsxtra, array, enumitem, mathrsfs, verbatim, epigraph, tabularx,mathtools, esint, tcolorbox, authblk}
\usepackage[symbol]{footmisc}

\numberwithin{equation}{section}

\usepackage[dvipsnames]{xcolor}
\usepackage{etoolbox}
\usepackage{multirow}
\usepackage[hypertexnames=false]{hyperref}
\hypersetup{
    colorlinks=true,
    linkcolor=blue,
    citecolor=blue,
    urlcolor=red,
    pdftitle={Vanishing},
    }

\definecolor{titlepagecolor}{cmyk}{1,.60,0,.40}
\patchcmd{\subsection}{\normalfont}{\normalfont\color{black}}{}{}
\DeclareFixedFont{\titlefont}{T1}{ppl}{b}{it}{0.5in}
\def\th@plain{%
  \thm@notefont{}
  \itshape 
}
\def\th@definition{%
  \thm@notefont{}
  \normalfont 
}
\makeatother

\usepackage{srcltx} 
\usepackage{amscd}
\usepackage{pb-diagram}
\usepackage[all]{xy}
\xyoption{matrix}
\xyoption{arrow}
\usepackage[normalem]{ulem} 
\usepackage{pdfsync}
 
\usepackage{thmtools}

\DeclareUnicodeCharacter{2212}{-,+}
\usepackage[none]{hyphenat}

\theoremstyle{plain} 
\newtheorem{defn}{Definition}[section]
\newtheorem{theorem}[defn]{Theorem}

\newtheorem{lemma}[theorem]{Lemma}
 \newtheorem{prop}[defn]{Proposition}

\newtheorem{Theorem}{Theorem}

\title{Vanishing elements of prime power order and their class size property}
\author{Sonakshee Arora\textsuperscript{\textdagger} and Rahul Dattatraya Kitture\textsuperscript{$\ddagger$}}

\date{}
 
\begin{document}
\maketitle 

\begin{abstract} Study of the structure of groups by variation in the arithmetic conditions on conjugacy classes and character degrees has produced several interesting results and open problems. In continuation of such work, Dolfi and Lucido in \cite{MR1826493} introduced a property for groups. For primes $p,q$, a group $G$ is said to have property $P(p,q)$ if every $p'$-element in $G$ has $q'$-class size. They obtained several results on the structure of $G$ and of some subgroups when $G$ satisfies the property $P(p,q)$. Motivated by this work, we introduce a vanishing analogue of the above property: for primes $p \neq q$, a finite group $G$ is said to have the property $P_v(p,q)$ if every vanishing $p'$-element of prime power order in $G$ has conjugacy class size not divisible by $q$. We show that no finite simple group satisfies the property $P_v(p,q)$ for primes $p\neq q$ dividing $|G|$. We use this result to show that if a finite group $G$ satisfies the property $P_v(p,q)$ with $p \neq q$ and $p > 2$, then $O^{q'}(G)$ (subgroup generated by all Sylow $q$-subgroups of $G$) is solvable. This generalises a result of Dolfi and Lucido under weaker conditions.

\vskip3mm
\noindent{\bf Keywords.} Vanishing elements, irreducible characters, simple groups, conjugacy classes
		
\noindent{\bf Mathematics Subject Classification (2020)}: 20C15, 20E45, 20E32
\end{abstract}
\section{Introduction}
\noindent 
Different arithmetic conditions on the conjugacy classes of a group have led to interesting results about its structure. A glance at the survey article \cite{MR2875589} shows development on the subject. 

On the other hand, the character table of a group has been extensively used to determine the structure of the group. Many results about the structure of the group from conditions on conjugacy classes have analogues in terms of character degrees. A famous theorem in this direction is given by Camina in $1972$ (\cite{MR294481}, Corollary 1) in which he proved: \textit{if a prime $p$ does not divide the size of any conjugacy class of $G$, then $G$ has a central Sylow $p$-subgroup.} A dual version of this result, proved by Ito in $1951$ (\cite{MR41852}, Proposition 5) for solvable groups and Michler in $1986$ (\cite{MR842482}, Theorem 5.4,) for general groups, states: \textit{If a prime $p$ does not divide the degree of any irreducible character of $G$, then $G$ has an abelian normal Sylow $p$-subgroup.} Here, the characters are $\mathbb{C}$-irreducible characters. In $1986$ ( \cite{MR842482}, Theorem 5.5), Michler proved an analogue of his stated theorem for $p$-Brauer characters: \textit{A finite group $G$  has a normal Sylow $p$-subgroup ($p>2$) if and only if $p$ does not divide the degree of any irreducible $p$-Brauer character of $G$.} 

Manz and Wolf in 1988 (\cite{MR937602}, Theorem 2.9) obtained the following variation of Michler's last result stated above, considering two primes: \textit{Let $p\neq q$ be primes. If $q$ does not divide the degree of all $p$-Brauer characters of $G$, then $O^{q'}(G)$ is solvable.} (Here, $O^{q'}(G)$ is the subgroup generated by all the Sylow $q$-subgroups of $G$.) Motivated by this result and duality in character table between conjugacy classes and character degrees, Dolfi and Lucido in 2001 (\cite{MR1826493}, Theorem 3) obtained the following analogue of the theorem of Manz and Wolf: \textit{If $q$ does not divide the size of conjugacy class of every $p'$-element of $G$, then $O^{q'}(G)$ is solvable}. 

We prove the theorem of Dolfi and Lucido under a quite weaker assumption, in which we restrict to conjugacy classes of \textbf{vanishing} $p'$-elements of \textbf{prime power order}. For simplicity, we define a property, analogous to the one defined by Dolfi and Lucido in \cite{MR1826493}, that we recall here: \textit{a group $G$ is said to satisfy the property $P(p,q)$ if $q$ does not divide the class size of any $p'$-element in $G$.}

\vskip5mm\noindent 
\textbf{Definition}:\textit{ A group $G$ is said to satisfy the property $P_{v}(p,q)$ if $q$ does not divide the class size of any vanishing $p'$-element of prime power order in $G$.}

\vskip5mm
As an example, let $G=C_7\rtimes C_6$ with faithful action of $C_6$ on $C_7$; it is a Frobenius group of order $42$. Then $G$ satisfies the property $P_v(2,3)$, but does not satisfy the property $P(2,3)$. 
\vskip5mm
Our main results are the following. 

\begin{Theorem}\label{thma}
A non-abelian finite simple group $G$ does not satisfy the property $P_v(p,q)$, for any prime divisors $p\neq q$ of $|G|$.
\end{Theorem}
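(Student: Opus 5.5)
Fix a non-abelian simple group $G$ and distinct primes $p,q$ dividing $|G|$. To show that $G$ fails $P_v(p,q)$ it suffices to produce one element $x$ with the following properties: $x$ has prime power order $r^a$ with $r\neq p$, $x$ is a vanishing element, and $q$ divides $|G:C_G(x)|$. I would take $x$ of $r$-power order with $r\ne p$, chosen so that $C_G(x)$ contains no Sylow $q$-subgroup of $G$.

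The most convenient tool for vanishing is defect zero characters. If $\chi\in\mathrm{Irr}(G)$ has $r$-defect zero, then $\chi(g)=0$ for every $g$ whose order is divisible by $r$. So the plan is as follows.

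\begin{enumerate}
\item Show that $G$ has an $r$-block of defect zero for a suitable prime $r\neq p$. The result of Granville--Ono and Michler--Willems gives this for every prime $r\ge 5$ and every simple group. For $r\in\{2,3\}$ it holds for all groups of Lie type; for $A_n$ and the sporadic groups the exceptions are explicitly known.
\item Take $x$ to be an $r$-element; by the first step it is then automatically vanishing.
\item Arrange that $q$ divides $|x^G|$.
\end{enumerate}

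The choice of $r$ splits into two cases.

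\textbf{Case $q\neq r$ possible.} I would try to choose $r\notin\{p,q\}$ with $r\mid |G|$; by Burnside's $p^aq^b$ theorem, $|G|$ has at least three prime divisors, so this is possible. We then need an $r$-element $x$ with $q\mid |G:C_G(x)|$. Suppose instead that every $r$-element of $G$ centralizes some Sylow $q$-subgroup. Then each element of a Sylow $r$-subgroup $R$ commutes with a conjugate of $Q$. Applying this to the generators of $R$, I would aim for a contradiction with simplicity. One route is the theorem that if $q$ does not divide $|x^G|$ then $x^G$ generates a normal subgroup with structural restrictions; a cleaner route is Kazarin's theorem, under which such an $x$ would lie in the solvable radical. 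Concretely, if $|x^G|$ is a power of a prime, then $x$ lies in a solvable normal subgroup. That is not immediately applicable here, so the realistic approach is to use the classification: pick $x$ of prime order $r$, a primitive prime divisor (Zsigmondy) of the relevant order, whose centralizer is a torus of order coprime to $q$. For alternating groups, take $x$ an $r$-cycle or a product of cycles with small centralizer. For sporadic groups, check the ATLAS.

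\textbf{Case the only available prime is $r=q$.} This happens, for example, when $\pi(G)=\{2,3,p\}$ (the groups $A_5$, $A_6$, $L_2(7)$, $L_2(8)$, $L_2(17)$, $L_3(3)$, $U_3(3)$, $U_4(2)$), and possibly in other small cases. Here one takes a $q$-element $x$, which is never central in $G$, so its class size is automatically divisible by some prime. One then needs $q\mid |x^G|$, i.e. that $x$ is not $q$-central, or one picks $r$ differently; these cases are handled directly from the character tables.

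The main obstacle is the uniform argument in the Lie type case: finding, for every pair $(p,q)$, an $r$-element with $r\neq p$ that is simultaneously vanishing and has centralizer of order prime to $q$, or at least with $q\mid |G:C_G(x)|$. I expect to handle this with regular semisimple elements in maximal tori $T$ together with the Steinberg character. The Steinberg character has degree $|G|_\ell$ (where $\ell$ is the defining characteristic) and vanishes off $\ell'$-elements. Choosing two different tori $T_1,T_2$ of coprime orders, both generated by Zsigmondy-prime elements, at least one of them avoids $p$, and $q$ divides the index of any such centralizer unless $q$ divides $|T_i|$. If $q\mid |T_i|$, I would switch to the other torus or to a unipotent element, which is vanishing by Steinberg and has class size divisible by every non-defining prime. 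The remaining small groups would be checked computationally.
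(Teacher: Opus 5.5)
\noindent The step that does not go through as written is the one you flag yourself: $S$ of Lie type in characteristic $\ell=p$. There unipotent elements are excluded, and everything rests on the claim that $S$ has two tori of coprime order, each equal to the centralizer of an element of Zsigmondy prime order, with only finitely many exceptions left to a computer. You never establish this; in particular, no tori are given for the exceptional or twisted types. As stated, the claim also fails in a way no computation can repair. When $\ell=2^s-1$ is a Mersenne prime, $\ell^2-1$ has no primitive prime divisor. So in $\mathrm{PSL}_2(\ell)$ the torus of order $(\ell+1)/2=2^{s-1}$ contains no Zsigmondy element, and the same happens in $\mathrm{PSL}_2(\ell^2)$, $\mathrm{PSL}_3(\ell)$, $\mathrm{PSU}_3(\ell)$ and $\mathrm{PSp}_4(\ell)$. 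These families are not known to be finite. They can be patched by hand with other regular elements, but that work is not in your sketch. The same issue affects your treatment of $q=\ell\neq p$: in $\mathrm{PSL}_2(7)$ with $(p,q)=(3,7)$, the element you need is an involution, which your mechanism does not produce.

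\noindent The missing idea, which the paper uses, is the prime graph. If $x\in S$ has prime order $s$ and $s$ lies in a component not containing $q$, then $C_S(x)$ contains no element of order $q$, so $q\mid |x^S|$ without any knowledge of centralizers. With at least three components, take $s$ outside the components of $p$ and $q$; vanishing of some $s$-element then follows from Theorem \ref{Thm_normal}. With two components and $p=\ell$, take $s$ in the component not containing $q$, avoiding $p$ (Lemma \ref{lemma2.4}). For $q=\ell\neq p$ no tori are needed either, since $C_S(U)\le U$ for a Sylow $\ell$-subgroup $U$. This disposes of every exceptional group except $E_7(t)$ with $t>3$, and of the Zsigmondy-exceptional classical groups apart from a few small ones treated directly. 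The torus argument is then needed only for connected prime graphs (Lemmas \ref{lemma_orthogonal} and \ref{lemma2.6}), where your argument coincides with the paper's.

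\noindent The rest of your plan is sound and partly simpler than the paper's. Regular unipotent elements for $p\neq\ell\neq q$ are exactly the paper's Case II; note that you need a regular unipotent element, not an arbitrary one. For $A_n$ with $n\ge 11$, Granville--Ono vanishing avoids the $\mathrm{Aut}(S)$-extendible characters the paper builds, which Theorem \ref{thma} alone does not need. However, take $x$ to be a product of $\lfloor n/r\rfloor$ disjoint $r$-cycles with $r\ge 5$ and $r\notin\{p,q\}$. A single $r$-cycle can fail: a $5$-cycle in $A_{13}$ has class size $2^3\cdot 3^3\cdot 11\cdot 13$, which is prime to $7$. Finally, your ``$r=q$'' case is vacuous, since you already noted that $|S|$ has a third prime divisor.
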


\vskip3mm
\noindent
By Theorem \ref{thma}, we obtain an analogue of the theorem of Dolfi and Lucido mentioned above (\cite{MR1826493}, Theorem 3). 
\vskip-5mm\noindent
\begin{Theorem}\label{thmb}
If $G$ is a finite group, satisfying the property $P_v(p,q)$ with primes $p\neq q$, and $p>2$, then $O^{q'}(G)$ is solvable.
\end{Theorem}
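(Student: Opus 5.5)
We argue by induction on $|G|$, taking $G$ to be a minimal counterexample: $G$ satisfies $P_v(p,q)$ with $p>2$, but $O^{q'}(G)$ is not solvable. The first task is to check that the property passes to the relevant subquotients.

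\textbf{Quotients.} Let $N\trianglelefteq G$ and let $xN$ be a vanishing $p'$-element of prime power order in $G/N$. Then $\chi(xN)=0$ for some $\chi\in\mathrm{Irr}(G/N)$, and the inflation of $\chi$ shows that every element of the coset $xN$ is vanishing in $G$. Write $o(xN)=r^a$ and take the $r$-part $y=x_r$ of $x$. Then $yN=xN$, $y$ has prime power order, and $y$ is a $p'$-element whenever $r\neq p$, which holds since $xN$ is a $p'$-element. So $y$ is a vanishing $p'$-element of prime power order in $G$. Because $|(yN)^{G/N}|$ divides $|y^G|$, and $q\nmid|y^G|$ by hypothesis, every quotient $G/N$ again satisfies $P_v(p,q)$.

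\textbf{Reduction to a chief factor.} Note that $O^{q'}(G/N)=O^{q'}(G)N/N$. Choose a minimal normal subgroup $N$. By induction, $O^{q'}(G)N/N$ is solvable. If $N$ is solvable, then $O^{q'}(G)$ is solvable, a contradiction. So every minimal normal subgroup $N$ is nonsolvable, say $N=S_1\times\cdots\times S_k$ with the $S_i\cong S$ nonabelian simple. If $q\nmid|N|$, then $N\cap O^{q'}(G)$ is a $q'$-group. One would then hope to argue that $O^{q'}(G)$ is solvable modulo a normal $q'$-subgroup, and use the facts that $O^{q'}$ is generated by $q$-elements and that $O^{q'}(O^{q'}(G))=O^{q'}(G)$ to push this down. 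This step needs care. A cleaner route is to use Feit--Thompson together with the Schur--Zassenhaus theorem to handle nonsolvable $q'$-sections, or alternatively to apply induction to $O^{q'}(G)$ itself, assuming the property is inherited by normal subgroups containing a Sylow $q$-subgroup.

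\textbf{Main step: $q$ divides $|N|$.} We may now assume $q\mid|N|$, and we want to produce a vanishing $p'$-element $x\in N$ of prime power order with $q\mid|x^G|$. The tool is Theorem \ref{thma}, applied to $S$ if $p\mid|S|$. For this we need to lift vanishing in $S$ to vanishing in $G$. The standard device is a character of $p$-defect zero, or more generally a character extendible to $\mathrm{Aut}(S)$ whose product $\theta\times\cdots\times\theta$ extends to $G$. By Clifford theory, zeros of such a character on $N$ give zeros of an irreducible character of $G$ lying over it.

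For $p\ge 5$, Granville--Ono guarantees that every simple group $S$ has a $p$-block of defect zero. Any $p$-singular element of $S$ is then vanishing, but those are excluded from our property. Instead we use that $\theta\in\mathrm{Irr}(S)$ of $p$-defect zero vanishes on $p$-singular elements. What we actually need is a $p'$-element of $r$-power order which is vanishing, and a natural choice is an $r$-element with $r\neq p$ together with a character of $r$-defect zero. So choose $r=q$: a $q$-element $x\in S$ with $q\mid|C_S\text{-class}|$ exists whenever a Sylow $q$-subgroup is not central. Such an $x$ is a $p'$-element, and it is vanishing in $G$ provided $S$ has a $q$-defect-zero character $\theta$ such that $\theta^{\times k}$ extends to $G$ (or whose induced character vanishes on $x$).

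The main obstacle is exactly this: securing a defect-zero character for $q$ (or for a suitable prime) that is invariant under, and extends to, $\mathrm{Aut}(S)$, or else using the known result that simple groups have $\mathrm{Aut}(S)$-invariant defect-zero characters with small exceptions ($q=2,3$, alternating groups and a few sporadic cases). This is also why the hypothesis $p>2$ is needed. If that route fails, I would fall back on the argument of Theorem \ref{thma} itself, which presumably exhibits, for each simple $S$, explicit vanishing prime power order elements; I would check that the vanishing persists in every almost simple group $S\le A\le\mathrm{Aut}(S)$ and then pass to $G$ via $N\,C_G(N)$.
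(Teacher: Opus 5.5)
Your verification that $P_v(p,q)$ passes to quotients is correct (the paper's Claim~1 uses it tacitly), and so is your reduction to a nonsolvable minimal normal subgroup $N=S_1\times\cdots\times S_k\le O^{q'}(G)$. The genuine gap is the case $q\nmid|N|$, which you defer as ``needing care''. That case is the heart of the theorem, and none of your proposed fixes works:
\begin{itemize}
\item Being solvable modulo a normal $q'$-subgroup gives nothing. For $q=7$, both $A_5\wr C_7$ and $\mathrm{PSL}_2(2^7)\rtimes C_7$ (with a field automorphism) have $O^{7'}(G)=G$ nonsolvable, with $N$ a $7'$-group and $G/N\cong C_7$.
\item Feit--Thompson only disposes of $q=2$, and Schur--Zassenhaus produces complements, not solvability.
\item Inheritance of $P_v(p,q)$ by $O^{q'}(G)$ is unjustified, since the hypothesis only controls elements vanishing in $G$. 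Even granted, it would only reduce you to $G=O^{q'}(G)$.
\end{itemize}

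The missing idea is to apply the hypothesis again to elements of $N$, to show that a Sylow $q$-subgroup $Q$ centralizes $N$. Then $N\le O^{q'}(G)\le C_G(N)$, contradicting that $N$ is nonabelian. As $q\nmid|N|$, $Q$ can act nontrivially only by permuting the $S_i$ or by coprime (field) automorphisms, and both must be ruled out.

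Permutations are excluded by Lemma~\ref{lemn}, with a separate $\{2,3\}$-stabilizer argument for $\mathrm{PSL}_2(l)$. There $O^{q'}(G)/N$ is solvable by induction, and a result on orbits of solvable permutation groups on the power set is used. One builds $x\in N$ whose nontrivial coordinates are vanishing $p'$-elements of a single prime-power order taken from two distinct $\mathrm{Aut}(S)$-orbits, which forces $q\mid|x^G|$. This is where $p>2$ is used.

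Field automorphisms are excluded as follows. For every prime $l\neq p$ dividing $|S_i|$, some $l$-element of $S_i$ vanishes in $G$ (Lemma~\ref{cor:vanishing_minimal_normal_combined}). Hence it has $q'$-class size and, after conjugation inside $O^{q'}(G)$, is centralized by $Q$. So any $\sigma\in Q$ acting nontrivially on $S_i$ would have $|C_{S_i}(\sigma)|$ divisible by every prime of $|S_i|$ except possibly $p$, contradicting Lemma~\ref{lem2.9}.

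Your treatment of the case $q\mid|N|$ also contains a false step. Having a non-central Sylow $q$-subgroup does not guarantee a $q$-element whose class size is divisible by $q$. If the Sylow $q$-subgroup is abelian, every $q$-element is centralized by one; in $A_5$ no $q$-element has class size divisible by $q$, for any $q$.

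Moreover, the obstacle you single out is not one. By Lemma~\ref{Brough_van}, a $q$-defect-zero character of a normal subgroup makes all of its $q$-singular elements vanish in $G$, with no extendibility needed. Extendibility to $\mathrm{Aut}(S)$ (Lemma~\ref{lem2}) is only needed for the groups lacking defect-zero characters at $2$ or $3$, which the paper handles by the explicit Lemmas~\ref{lem_alt} and~\ref{cor2}.

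For every other $S$, each nonidentity element of $N$ vanishes in $G$, so $S$ itself satisfies $P_v(p,q)$. This contradicts Theorem~\ref{thma} if $p\mid|S|$, and Theorem~\ref{Sotomayor_van} (with $q$ in place of $p$) if $p\nmid|S|$; this is the content of Lemma~\ref{lemab}.
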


In defining the property $P_v(p,q)$, we allow $p,q$ to be any primes, not necessarily dividing $|G|$. This will help us to study the structure of $O^{q'}(G)$ by induction, specifically in the proof of Theorem \ref{thmb}. The proofs of both theorems depend on the classification of finite simple groups.

\vskip2mm
\noindent\textbf{Notations:}
\noindent All the groups considered here are finite. For $x,g\in G$, we write $x^g=gxg^{-1}$. The conjugacy class (resp. centralizer) of $x$ in $G$ is denoted by $x^G$ (resp. $C_G(x)$). If $G$ acts on a set $X$, and $Y\subseteq X$, then $\mathrm{Stab}_{G}(Y)$ denotes the stabilizer in $G$ of $Y$. For notations of simple groups, we refer to ATLAS \cite{MR827219} and Wilson (\cite{MR2562037}, p.3). Other notations are standard (see \cite{MR1280461}, \cite{MR569209}).

\section{Preliminaries}
We begin this section by stating some known results that we will use throughout the paper.

\begin{lemma}(\cite{MR2262862}, Lemma 5)\label{lem2}
Let $G$ be a finite group and $M=S_1\times\cdots\times S_k $ a minimal normal subgroup of $G$, where every $S_i$ is isomorphic to a non-abelian simple group $S$. If $\theta\in \mathrm{Irr}(S)$ extends to $\mathrm{Aut}(S)$, then $\theta\times\cdots\times \theta\in \mathrm{Irr}(M)$ extends to $G$.
\end{lemma}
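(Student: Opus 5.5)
This is a cited lemma (Bianchi–Chillag–Lewis–Pacifici, Lemma 5), and I would prove it by Clifford theory together with a tensor-induction construction.

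\textbf{Step 1: invariance.} Set $\psi=\theta\times\cdots\times\theta$. Since $M$ is minimal normal, $G$ acts on $M$ by conjugation and permutes the factors $S_i$. After fixing isomorphisms $S_i\cong S$, each $g\in G$ sends $S_i$ to $S_{\pi(i)}$ through an automorphism of $S$. Because $\theta$ extends to $\mathrm{Aut}(S)$, it is $\mathrm{Aut}(S)$-invariant. Hence $\psi^g=\psi$ for all $g\in G$, so $\psi$ is $G$-invariant.

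\textbf{Step 2: reduce to $\mathrm{Aut}(M)$.} Conjugation gives a homomorphism $G\to \mathrm{Aut}(M)$ with kernel $C_G(M)$. Since $M$ has trivial centre, $M\cap C_G(M)=1$, and $MC_G(M)/C_G(M)\cong M$ is identified with $\mathrm{Inn}(M)$. It therefore suffices to extend $\psi$ from $\mathrm{Inn}(M)\cong M$ to an irreducible character $\hat\psi$ of $\mathrm{Aut}(M)$.

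Indeed, restricting $\hat\psi$ to the image of $G$ and inflating gives a character $\chi$ of $G$ with $C_G(M)\le\ker\chi$. Restricted to $M$, this $\chi$ is $\psi$, because the composite $M\to G\to\mathrm{Aut}(M)$ is the isomorphism $M\cong\mathrm{Inn}(M)$. So $\chi$ is the required extension.

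\textbf{Step 3: extend to the wreath product.} We have $\mathrm{Aut}(M)\cong \mathrm{Aut}(S)\wr S_k$. Let $\hat\theta$ be an extension of $\theta$ to $A=\mathrm{Aut}(S)$. Realise $\hat\theta$ by a representation $\rho: A\to GL(V)$ and define a representation of $A\wr S_k$ on $V^{\otimes k}$:
\[
(a_1,\dots,a_k)\ \text{acts by}\ \rho(a_1)\otimes\cdots\otimes\rho(a_k),
\]
while $\sigma\in S_k$ permutes the tensor factors. These two actions are compatible with the wreath product relations, so this is the standard tensor-induction construction and it gives a genuine representation of $A\wr S_k$. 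Its restriction to $A^k\supseteq M$ is $\hat\theta\times\cdots\times\hat\theta$, whose restriction to $M=S^k$ is $\psi$. Thus $\hat\psi$ extends $\psi$, and it is irreducible because its restriction $\psi$ is.

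\textbf{Main obstacle.} The only delicate point is checking that the permutation action on $V^{\otimes k}$ combines with the diagonal action into a homomorphism of the wreath product. This is exactly where using a single extension $\hat\theta$ on all factors matters, since it makes the permutation of tensor factors commute correctly with $\rho(a_1)\otimes\cdots\otimes\rho(a_k)$. The identification $\mathrm{Aut}(S^k)\cong\mathrm{Aut}(S)\wr S_k$ holds because the $S_i$ are the only minimal normal subgroups of $M$, so every automorphism permutes them.
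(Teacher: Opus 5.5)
Your proof is correct. The paper gives no proof of its own; it simply cites Bianchi--Chillag--Lewis--Pacifici. Your route is the standard argument behind that cited result: embed $G/C_G(M)$ into $\mathrm{Aut}(M)\cong\mathrm{Aut}(S)\wr S_k$, tensor-induce a single extension $\hat\theta\in\mathrm{Irr}(\mathrm{Aut}(S))$ to get an extension of $\theta\times\cdots\times\theta$ to the wreath product, then restrict and inflate back to $G$.

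Step 1 is superfluous, since invariance follows from the extension you construct. The minimality of $M$ is also never used, so the argument works for any normal subgroup of $G$ isomorphic to $S^k$.
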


\begin{lemma}(\cite{MR3918622}, Lemma 2.2)\label{Brough_van} Let $G$ be a group, and $N$ a normal subgroup of $G$. If $N$ has an irreducible character of $q$-defect zero, then every element of $N$ of order divisible by $q$ is a vanishing element in $G$.
\end{lemma}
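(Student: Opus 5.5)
This statement (Lemma \ref{Brough_van}) is quoted from \cite{MR3918622}, but it follows directly from the standard theory of characters of defect zero, and I would prove it as follows. Let $\theta\in\mathrm{Irr}(N)$ have $q$-defect zero, so $|N|_q$ divides $\theta(1)$. The key classical fact (Brauer; see e.g. Isaacs, Theorem 8.17) is that a character of $q$-defect zero vanishes on every element of order divisible by $q$, i.e. on every $q$-singular element. The task is to transfer this vanishing from $N$ to some irreducible character of $G$.

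Choose $\chi\in\mathrm{Irr}(G)$ lying over $\theta$. By Clifford's theorem,
\[
\chi_N = e\sum_{i=1}^{t}\theta_i,
\]
where $\theta_1=\theta,\dots,\theta_t$ are the distinct $G$-conjugates of $\theta$. Each $\theta_i=\theta^{g_i}$ has the same degree as $\theta$, so each $\theta_i$ also has $q$-defect zero. Moreover, conjugation by $g_i$ preserves element orders, so $\theta_i(x)=\theta(x^{g_i^{-1}})$ (with the appropriate convention).

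Now let $x\in N$ have order divisible by $q$. Then each conjugate $x^{g}$ is also a $q$-singular element of $N$, so $\theta_i(x)=0$ for every $i$ by the defect zero fact. Hence $\chi(x)=e\sum_i\theta_i(x)=0$, and $x$ is a vanishing element of $G$.

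The only nontrivial ingredient is the vanishing of defect zero characters on $q$-singular elements. If a self-contained argument is wanted, I would prove it by contradiction using central characters. Write $x=x_qx_{q'}$ and suppose $\theta(x)\neq 0$. Then $\omega_\theta(\widehat{K})=|K|\theta(x)/\theta(1)$, where $K=x^N$, is an algebraic integer. Combine this with the fact that $|C_N(x)|_q>1$ while $|N|_q\mid\theta(1)$, and use the block-theoretic statement that a defect zero block is its own block with trivial defect group, which forces vanishing off the $q$-regular part. I expect this classical step to be the only real obstacle, and I would simply cite it.
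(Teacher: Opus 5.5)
Your proof is correct and is the standard argument for this lemma; the paper itself only quotes the lemma from \cite{MR3918622} without proof. The argument: take $\chi\in\mathrm{Irr}(G)$ over $\theta$, use Clifford's theorem to see that every constituent $\theta^{g}$ of $\chi_N$ again has $q$-defect zero, hence vanishes on all $q$-singular elements of $N$ by (\cite{MR1280461}, Theorem 8.17), so $\chi(x)=0$. Drop the closing ``self-contained'' sketch, though: integrality of $|K|\theta(x)/\theta(1)$ only forces $\theta(x)$ to be divisible by $|C_N(x)|_q$ locally at $q$, not $\theta(x)=0$, so that sketch does not prove the vanishing, and citing Theorem 8.17 (as the paper does elsewhere) is the right move.
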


A group $G$ is said to be $p$-nilpotent if it has a normal $p$-complement.
\begin{theorem}(\cite{MR3918622}, Theorem 3)\label{Sotomayor_van} Let $G = AB$ be a core-factorization.
\begin{enumerate}
\item Assume that $p$ does not divide $|x^G|$ for every $p'$-element of prime power order
$x\in A\cup B$ vanishing in $G$. Then, $G$ is $p$-nilpotent. 
\item If $p$ does not divide $|x^G|$ for every prime power order element $x\in A\cup B$ vanishing in
$G$, then $G$ is $p$-nilpotent with abelian Sylow $p$-subgroups.
\end{enumerate}
\end{theorem}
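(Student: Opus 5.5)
The plan is to reduce to the case $A=B=G$ by exploiting the core-factorization hypothesis, and then to argue by a minimal counterexample. I will use the standard feature of a core-factorization $G=AB$: every element of prime power order of $G$ is conjugate in $G$ to an element of $A\cup B$. I am taking this from the definition in the cited work rather than deriving it; if the definition there does not give it directly, the first task is to recover it via Sylow subgroups, using that $A$ and $B$ between them contain conjugates of all Sylow subgroups. Class sizes and vanishing are both conjugation invariant. So the hypothesis in (1) then says exactly that $p\nmid |x^G|$ for every vanishing $p'$-element $x\in G$ of prime power order, and (2) likewise for all vanishing elements of prime power order. It therefore suffices to prove the non-factorized statements.

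For (1), let $G$ be a counterexample of minimal order.

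\textbf{Quotients.} If $N\trianglelefteq G$, the hypothesis passes to $G/N$. A character of $G/N$ vanishing at $xN$ inflates to a character vanishing at $x$. Prime power order $p'$-elements of $G/N$ lift to elements of the same kind. Finally, $|(xN)^{G/N}|$ divides $|x^G|$.

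\textbf{Reduction.} Minimality then gives a unique minimal normal subgroup $M$, with $G/M$ $p$-nilpotent and $O_{p'}(G)=1$.

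\textbf{$M$ is an elementary abelian $p$-group.} Suppose first that $M=S_1\times\cdots\times S_k$ is non-abelian. For a prime $r\ne p$ dividing $|S|$, one obtains from the theorem of Granville--Ono and the results on defect zero characters of simple groups (Michler, Willems) an $r$-defect zero character of $S$, hence one of $M$. By Lemma~\ref{Brough_van}, every $r$-element of $M$ is vanishing in $G$. Choosing $r$ suitably, one finds an $r$-element $x\in M$ with $p\mid |x^G|$, contradicting the hypothesis. For $p=2$ one should use that some odd-order element of $S$ has even class size. So $M$ is an elementary abelian $q$-group for some prime $q$. If $q\ne p$ then $M\le O_{p'}(G)=1$, a contradiction, so $q=p$.

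\textbf{The abelian case.} Let $K/M$ be the normal $p$-complement of $G/M$ and $H$ a $p$-complement of $K$, so $K=MH$ and $G=MN_G(H)$ by the Frattini argument. The goal is to show that $H$ centralizes $M$; then $H\trianglelefteq G$ forces $H\le O_{p'}(G)=1$, and $G$ is a $p$-group, a contradiction.
\begin{itemize}
\item If some prime power order element $y\in H$ fails to centralize $M$, then $[M,y]\ne 1$. I would then show that $y$, or a suitable $My$-coset element, is vanishing, using the known result that in the coset $xM$ with $M$ abelian minimal normal and $x\notin C_G(M)$ the elements are vanishing (Isaacs--Navarro--Wolf type). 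Its class then has size divisible by $p$, since $|M:C_M(y)|$ is a $p$-power greater than $1$, and this contradicts the hypothesis.
\item This vanishing criterion is the step I expect to be hardest. It requires care with the Clifford theory over $M$, namely choosing $\lambda\in\mathrm{Irr}(M)$ with $y\notin$ stabilizer so that induced characters vanish on $y$. Here one uses that $M$ is the unique minimal normal subgroup, so that $G$ acts faithfully on $\mathrm{Irr}(M)$ modulo kernels.
\end{itemize}

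For (2), part (1) already gives $G=O_{p'}(G)P$ with $P$ a Sylow $p$-subgroup. Now apply the hypothesis to vanishing $p$-elements. By the result of Isaacs--Navarro--Wolf, nonvanishing elements of a group with normal $p$-complement lie in $O_{p'}(G)\cdot Z$-type subgroups. Arguing modulo $O_{p'}(G)$, every vanishing $p$-element has $p'$-class size, hence lies in the centre of a Sylow subgroup. Together with the fact that non-vanishing elements of the $p$-group $G/O_{p'}(G)$ lie in its centre, this forces $P$ to be abelian, by Camina's theorem applied to the $p$-group quotient.
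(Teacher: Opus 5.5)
The paper does not prove this statement; it is quoted from \cite{MR3918622}. So your argument has to stand on its own, and it has a genuine gap at exactly the step you flag.

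\textbf{The abelian case.} The ``Isaacs--Navarro--Wolf type'' criterion you invoke is not a result you can quote. The standard fact is narrower: if $C_M(y)=1$, then $|y^M|=|M|$ forces $y^M=yM$, and averaging $\chi$ over the coset $yM$ kills every $\chi\in\mathrm{Irr}(G)$ with $M\not\le\ker\chi$. From $[M,y]\neq 1$ alone you only get $y^M=y[M,y]$, which is in general a proper subset of $yM$.

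Your Clifford sketch does not repair this. A character $\chi\in\mathrm{Irr}(G\mid\lambda)$ is induced from $I_G(\lambda)$, so the induction formula forces $\chi(y)=0$ only when \emph{no} $G$-conjugate of $y$ lies in $I_G(\lambda)$. That means $y$ must fix no member of the whole $G$-orbit of $\lambda$, and $[M,y]\ne 1$ does not produce such an orbit.

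The missing idea is that you need not show a \emph{prescribed} element vanishes, because Theorem~\ref{Thm_normal} supplies vanishing elements. The fix runs as follows.
\begin{enumerate}
\item Since $G$ is not a $p$-group, pick a prime $r\ne p$ dividing $|G|$. A Sylow $r$-subgroup is not normal, since it would then lie in $O_{p'}(G)=1$. So Theorem~\ref{Thm_normal} gives an $r$-element $y$ that is vanishing in $G$.
\item The hypothesis gives $p\nmid|y^G|$. Hence $y$ centralizes a Sylow $p$-subgroup, and therefore centralizes $M$.
\item Let $K/M=O_{p'}(G/M)$; then $y\in K$. Put $D=C_G(M)\cap K\trianglelefteq G$. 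Here $M$ is central in $D$ and $D/M$ is a $p'$-group, so Schur--Zassenhaus gives $D=M\times O_{p'}(D)$, with $O_{p'}(D)\le O_{p'}(G)=1$.
\item Thus $y\in M$ is both a $p$-element and a $p'$-element, so $y=1$, which is never vanishing.
\end{enumerate}
This replaces the whole discussion of $H$, $N_G(H)$ and Clifford theory.

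\textbf{Other steps that are asserted rather than proved.}

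In the non-abelian case, ``choosing $r$ suitably'' is where all the content lies. Granville--Ono (with the Lie-type results) gives $r$-defect zero characters for $r\ge 5$, but for $r\in\{2,3\}$ there are exceptions: $A_n$ and several sporadic groups. Even when every $p'$-element of prime power order of $S_1$ vanishes in $G$ by Lemma~\ref{Brough_van}, you still need one whose class size is divisible by $p$. This follows from the Camina-type theorem that a group in which every $p'$-element of prime power order has $p'$-class size is $p$-decomposable, applied to $S$ via $|x^{S_1}|$ dividing $|x^G|$. The exceptional groups then need explicit checks of the kind in Lemmas~\ref{lem_alt} and~\ref{cor2}.

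Your Sylow-based fallback for reducing to $A=B=G$ is false for general factorisations. For $G=A\times B$ with $A\cong B\cong S_3$, the element $((12),(12))$ is conjugate into neither factor. So that reduction stands or falls with the precise definition of core-factorisation in \cite{MR3918622}, which you must check.

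Part (2) should be argued directly.
\begin{itemize}
\item After (1), $G/O_{p'}(G)$ is a $p$-group that inherits the hypothesis.
\item Non-vanishing elements of a nilpotent group are central (Isaacs--Navarro--Wolf).
\item So any non-central element would be a vanishing $p$-element whose class size is a non-trivial power of $p$, a contradiction.
\end{itemize}
The ``normal $p$-complement'' version of Isaacs--Navarro--Wolf you cite is not an available result, and the appeal to Camina's theorem there is unnecessary.
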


In other words if $G$ satisfies the property $P_v(p,p)$, then $G$ is $p$-nilpotent.

\begin{theorem}(\cite{MR2469367}, Theorem A)\label{Thm_normal} Let $G$ be a finite group and $p$ a prime number. If all the $p$-elements of G are non-vanishing, then $G$ has a normal Sylow $p$-subgroup.
\end{theorem}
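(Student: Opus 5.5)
The plan is a minimal counterexample argument. Let $G$ be a counterexample of minimal order: every $p$-element of $G$ is non-vanishing, but a Sylow $p$-subgroup $P$ is not normal. The hypothesis passes to quotients. If $N\trianglelefteq G$, every $p$-element of $G/N$ is the image $xN$ of a $p$-element $x\in G$. If $xN$ were vanishing in $G/N$, the inflated character would vanish at $x$, which is impossible. So by minimality $PN/N\trianglelefteq G/N$ for every nontrivial normal subgroup $N$.

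Now fix a minimal normal subgroup $N$ and split into three cases.

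\emph{$N$ is a $p$-group.} Then $N\le P$, so $PN=P$, and $P/N\trianglelefteq G/N$ gives $P\trianglelefteq G$, a contradiction.

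\emph{$N$ is non-abelian.} Write $N=S_1\times\cdots\times S_k$ with $S_i\cong S$ simple, and note that $p\mid |N|$, since otherwise we are reduced to the next case. I would use the classification to produce a $p$-defect zero character of $S$, hence of $N$ (take $\theta\times\cdots\times\theta$). By Lemma \ref{Brough_van}, every element of $N$ of order divisible by $p$, in particular any nontrivial $p$-element, is then vanishing in $G$, a contradiction. The existence of $p$-defect zero characters holds for $p\ge 5$ and all simple groups by Granville--Ono and Michler--Willems. For groups of Lie type in characteristic $p$ the Steinberg character does it. The residual cases are $p\in\{2,3\}$ with $S$ alternating or sporadic (and a few small exceptions). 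For these I would instead exhibit directly a $p$-element $x\in S$ and a character $\theta\in\mathrm{Irr}(S)$ extending to $\mathrm{Aut}(S)$ with $\theta(x)=0$. Lemma \ref{lem2} then gives a character of $G$ extending $\theta\times\cdots\times\theta$, and it vanishes at $(x,1,\dots,1)$. This is a finite, table-driven check for sporadics and a hook-length/Murnaghan--Nakayama argument for $A_n$.

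\emph{$N$ is an abelian $p'$-group.} Here $PN\trianglelefteq G$, so by the Frattini argument $G=N\,N_G(P)$. If $P$ centralizes $N$, then $P$ is normalized by $N$, hence $P\trianglelefteq G$. So we may assume some $x\in P$ acts nontrivially on $N$, and we need a character vanishing at some $p$-element. Take $\lambda\in\mathrm{Irr}(N)$ and $\chi=\psi^G$ with $\psi\in\mathrm{Irr}(G_\lambda\mid\lambda)$ by Clifford correspondence. If a $p$-element $y$ has no $G$-conjugate in $G_\lambda$, then $\chi(y)=0$. So the goal is a $\lambda$ whose stabilizer $G_\lambda$ omits some $G$-conjugacy class of $p$-elements, e.g.\ $p\nmid |G:G_\lambda|$ fails in a controlled way. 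I would try choosing $\lambda$ in an orbit of maximal size under the coprime action of $P$ on the dual group $\hat N$, and use that $C_P(\hat N)=C_P(N)<P$.

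This last case is where I expect the real obstacle, since regular orbits need not exist. My first attempt for $p$ odd would be to reduce to $G$ solvable near the bottom and use that non-vanishing elements of odd order in solvable groups lie in $F(G)$, placing $P$ in $F(G)$ and making it normal. For $p=2$ I would look for a separate orbit argument on $\hat N$.
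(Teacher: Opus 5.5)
The paper does not prove this statement; it imports it as Theorem~A of \cite{MR2469367}. So your sketch has to stand on its own, and as written it has two genuine gaps, both when $N$ is a $p'$-group. The passage to quotients, the case where $N$ is a $p$-group, and the case where $N$ is non-abelian with $p\mid |N|$ are fine; the last is exactly the argument of Lemma~\ref{cor:vanishing_minimal_normal_combined}(2) in the paper.

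\textbf{First gap: non-abelian $p'$-groups.} Your remark that $p\mid |N|$ ``since otherwise we are reduced to the next case'' is false. A non-abelian minimal normal $p'$-subgroup $N=S_1\times\cdots\times S_k$, on which $P$ may permute factors and induce field automorphisms, is not abelian. The defect-zero argument only yields vanishing elements of order divisible by $p$ inside $N$, and there are none. Your third case works with the dual group $\widehat N$, which needs $N$ abelian.

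\textbf{Second gap: abelian $p'$-groups.} Here you give no argument, and the suggested routes fail as stated. By Clifford theory you need a $p$-element with no $G$-conjugate in $I_G(\lambda)$. Since $G=N\,N_G(P)$ and $N$ fixes $\lambda$, the $G$-orbit of $\lambda$ is an $N_G(P)$-orbit, which is a union of several $P$-orbits. So a $P$-orbit of maximal size, or a derangement on one $P$-orbit via Jordan's theorem, does not supply such an element. The information $C_P(N)<P$ is too weak, since it allows $Z(P)$ to centralize $N$. Nothing forces $G$ to be solvable ($N_G(P)/P$ is an arbitrary $p'$-group), and the hypothesis does not pass to the solvable subgroup $PN$, so Isaacs--Navarro--Wolf cannot be invoked. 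For $p=2$ nothing is proposed.

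\textbf{The missing idea: combine faithfulness with $Z(P)$.} In a minimal counterexample $O_p(G)=1$ by your first case. The subgroup $C_{PN}(N)\trianglelefteq G$ contains $C_P(N)$. If $N$ is abelian it equals $N\times C_P(N)$, so $C_P(N)$ is characteristic in it; otherwise it is a $p$-group meeting $N$ trivially. Either way $C_P(N)\le O_p(G)=1$, so $Z(P)\neq 1$ acts faithfully and coprimely on $N$.

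Now suppose $\theta\in\mathrm{Irr}(N)$ has trivial stabilizer in $Z(P)$, and let $1\neq z\in Z(P)$. Since $N\le I_G(\theta)$ and $G=N\,N_G(P)$, a $G$-conjugate of $z$ lies in $I_G(\theta)$ only if some $N_G(P)$-conjugate of $z$ does. But $N_G(P)$ normalizes $Z(P)$ and $Z(P)\cap I_G(\theta)=1$. Hence every $\chi\in\mathrm{Irr}(G\mid\theta)$ vanishes at $z$, a contradiction. It remains to produce such a $\theta$.

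\emph{Abelian $N$.} An abelian group acting faithfully and coprimely on $\widehat N$ has a regular orbit. Decompose $\widehat N$ into irreducible $Z(P)$-modules. By Schur's lemma, an element acting nontrivially on a summand fixes no nonzero vector in it. A vector with all components nonzero then has trivial stabilizer.

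\emph{Non-abelian $N$.} Put the trivial character on every factor except one chosen factor in each $Z(P)$-orbit on $\{S_1,\dots,S_k\}$. On the chosen factor put a nontrivial character moved by every nontrivial element of the group of coprime automorphisms induced there by the kernel of the permutation action. This group is cyclic by the classification. A nontrivial coprime automorphism cannot fix all of $\mathrm{Irr}(S)$, by Brauer's permutation lemma, Glauberman's lemma and Jordan's theorem, so a character moved by the subgroup of order $p$ works.

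Since $Z(P)$ is abelian, all points of an orbit have the same stabilizer. Hence an element moving some factor moves the chosen factor of that orbit onto a trivially labelled one. An element fixing every factor but acting nontrivially on some $S_i$ acts nontrivially on the chosen factor of the orbit of $S_i$. So this $\theta$ has trivial stabilizer in $Z(P)$, which completes the minimal-counterexample argument.
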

\begin{prop}(\cite{MR3918622}, Proposition 2) Let $N$ be a non-abelian minimal normal subgroup of a finite group $G$, and $p$ a prime divisor of $|N|$. Then, there is a $p$-element in $N$ which is vanishing in $G$.
\end{prop}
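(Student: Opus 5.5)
The statement to prove is the last Proposition: if $N$ is a non-abelian minimal normal subgroup of $G$ and $p$ divides $|N|$, then some $p$-element of $N$ vanishes in $G$. The plan is to find an irreducible character of $N$ of $p$-defect zero and then apply Lemma \ref{Brough_van}. That lemma makes every element of $N$ of order divisible by $p$ vanishing in $G$, and since $p \mid |N|$ we may take any element of order $p$ in $N$.

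Write $N = S_1\times\cdots\times S_k$ with each $S_i\cong S$ a non-abelian simple group, so $p$ divides $|S|$. If $\theta\in\mathrm{Irr}(S)$ has $p$-defect zero, then
\[
\theta\times\cdots\times\theta\in\mathrm{Irr}(N)
\]
has degree $\theta(1)^k$. Its $p$-part is $|S|_p^k=|N|_p$, so it also has $p$-defect zero. The problem therefore reduces to producing a $p$-defect zero character of the simple group $S$. Extendibility to $G$ via Lemma \ref{lem2} is not needed here, because Lemma \ref{Brough_van} only requires such a character of the normal subgroup $N$.

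The key input is the theorem of Granville and Ono, together with Michler and Willems, on $p$-blocks of defect zero in simple groups. It states that every finite non-abelian simple group has a $p$-block of defect zero for every prime $p$, with a short list of exceptions: $p=2$ for certain sporadic and alternating groups, and $p=3$ for a few cases ($Suz$, $Co_3$ and some alternating groups). Outside these exceptions we are done.

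The main obstacle is the exceptional cases, where no defect zero character exists. For these I would argue directly, as in \cite{MR3918622}. For a $p$-element $x\in N$, write $x=(x_1,\dots,x_k)$. It suffices to find $\chi\in\mathrm{Irr}(S)$ that extends to $\mathrm{Aut}(S)$ and vanishes on some $p$-element $x_1\in S$. Then $\theta=\chi\times\cdots\times\chi$ extends to some $\hat\theta\in\mathrm{Irr}(G)$ by Lemma \ref{lem2}, and taking $x=(x_1,1,\dots,1)$ gives $\hat\theta(x)=\chi(x_1)\chi(1)^{k-1}=0$.

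So in the exceptional cases the task is to exhibit, for each listed group and $p\in\{2,3\}$, an $\mathrm{Aut}(S)$-invariant, extendible character vanishing on a $p$-element. For sporadic groups this can be read off the ATLAS. For alternating groups $A_n$ with $p=2$ or $3$, I would use characters of $S_n$ labelled by non-self-conjugate partitions; these restrict irreducibly to $A_n$ and extend to $\mathrm{Aut}(A_n)=S_n$ for $n\neq 6$. The Murnaghan–Nakayama rule then lets us choose one vanishing on a suitable $p$-cycle product, for instance a hook partition with no removable rim hook of the needed shape. The case $A_6$ would be checked by hand.
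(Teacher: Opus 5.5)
Your reduction is the right one. It matches the strategy the paper uses when it reproves this statement componentwise in Lemma \ref{cor:vanishing_minimal_normal_combined}(2); the proposition itself is only quoted from \cite{MR3918622}. If $S$ has a $p$-defect zero character, so does $N$, and Lemma \ref{Brough_van} finishes. Otherwise $p\in\{2,3\}$ and $S$ is on the list of (\cite{MR1321575}, Corollary 2), and an $\mathrm{Aut}(S)$-extendible $\chi$ vanishing at a $p$-element $x_1$ finishes via Lemma \ref{lem2} and $x=(x_1,1,\dots,1)$. The sporadic exceptions are a finite ATLAS check, so that part is fine.

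The gap is the alternating groups. They are the only infinite family among the exceptions, for both $p=2$ and $p=3$, and there you only assert that Murnaghan--Nakayama ``lets us choose'' a suitable partition. The one mechanism you name fails in general. The hook lengths of $(a,1^b)\vdash n$ are $n$ together with $1,\dots,a-1$ and $1,\dots,b$. So a hook has no rim $m$-hook only if $(n+1)/2\le m\le n-1$.

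This rules out every hook in several cases:
\begin{itemize}
\item For $p=3$, $n=7$: there is no power of $3$ in $[4,6]$.
\item For $p=2$, $n=16$: there is no power of $2$ in $[17/2,15]$.
\item For $p=2$, $n=9$: the only candidate $m=8$ forces cycle type $(8,1)$, which is an odd permutation.
\end{itemize}
Yet $A_7$ has no $3$-defect zero character, and $A_9$, $A_{16}$ have no $2$-defect zero character. So these cases genuinely need an argument, and there the zeros come from cancellation in the character formula, not from missing rim hooks.

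A uniform construction closes the gap. Use the characters from Lemma \ref{lem_alt}:
\[
\sigma=\chi^{(n-2,2)}=\pi(\pi-3)/2+\delta, \qquad \tau=\chi^{(n-2,1,1)}=(\pi-1)(\pi-2)/2-\delta.
\]
Their partitions are not self-conjugate, so they restrict irreducibly to $A_n$ and extend to $S_n=\mathrm{Aut}(A_n)$ for $n\ge 7$.
\begin{itemize}
\item For $p=2$ and $n\ge 9$, use the elements built from $4$-cycles plus an $8$-cycle or a transposition in the proof of Lemma \ref{lem_alt}. For $A_7$, $\sigma$ vanishes on the class $4A$.
\item For $p=3$, let $x$ be a product of $\lfloor n/3\rfloor$ disjoint $3$-cycles. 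Then $\delta(x)=0$ and $\pi(x)\in\{0,1,2\}$, so $\sigma(x)=0$ if $3\mid n$ and $\tau(x)=0$ otherwise.
\end{itemize}
With this inserted, your proof is complete.
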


\begin{theorem}(\cite{MR1402885}, Theorem 3)\label{zsigmondy}
Let $a$ and $n$ be integers greater than $1$. There exists a prime divisor $q$ of $a^n-1$ such that $q$ does not divide $a^j-1$ for all $j$ with $0<j<n$, except precisely in the following cases: $n=6$, $a=2$ or 
$n=2$, $a=2^s-1$, where $s\geq 2$. 
Such a prime $q$, when it exists, is called a \textit{primitive prime divisor of $a^n-1$}.
\end{theorem}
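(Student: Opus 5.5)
The statement is Zsigmondy's theorem, which the paper quotes rather than proves. I would prove it with cyclotomic polynomials. A primitive prime divisor of $a^n-1$ is exactly a prime $q$ with $\mathrm{ord}_q(a)=n$. Since $a^n-1=\prod_{d\mid n}\Phi_d(a)$, every such $q$ divides $\Phi_n(a)$. So the task is to show that $\Phi_n(a)$ has a prime factor $q$ with $\mathrm{ord}_q(a)=n$, outside the listed exceptions.

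\textbf{Step 1 (non-primitive divisors of $\Phi_n(a)$).} Let $q\mid\Phi_n(a)$ and put $m=\mathrm{ord}_q(a)$, so $m\mid n$. Suppose $m<n$. Then $q$ divides both $\Phi_n(a)$ and $\Phi_m(a)$. Since $x^n-1$ is separable modulo $q$ when $q\nmid n$, this forces $q\mid n$. Writing $n=m q^k$ with $q\nmid m$, the lifting-the-exponent lemma gives $v_q\bigl((a^n-1)/(a^{n/q}-1)\bigr)=1$ for $q$ odd. From this, $q$ is the largest prime divisor of $n$ and $v_q(\Phi_n(a))=1$. The one exception is $q=2$, which can only occur when $n=2$.

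\textbf{Step 2 (reduction to a size estimate).} By Step 1, if $n>2$ and $a^n-1$ has no primitive prime divisor, then $\Phi_n(a)\in\{1,p\}$, where $p$ is the largest prime dividing $n$. In particular $\Phi_n(a)\le p\le n$.

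\textbf{Step 3 (lower bound).} Write $\Phi_n(a)=\prod_{\zeta}(a-\zeta)$ over the primitive $n$-th roots of unity. For $n\ge2$ each factor satisfies $|a-\zeta|>a-1$, so $\Phi_n(a)>(a-1)^{\varphi(n)}$.
\begin{itemize}
\item For $a\ge3$: since $p\mid n$ we have $\varphi(n)\ge p-1$, so $\Phi_n(a)>2^{p-1}\ge p$. This contradicts Step 2.
\item For $a=2$: the bound above is useless, and this is the \textbf{main obstacle}. I would refine it by pairing conjugate roots, using $|2-\zeta|^2=5-4\cos\theta$. A factor is small only when $\theta$ is near $0$, and only a bounded number of primitive roots lie near $1$. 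Combining this with the identity $\Phi_n(2)=\Phi_{\mathrm{rad}(n)}(2^{n/\mathrm{rad}(n)})$ should give $\Phi_n(2)>p$ except for finitely many small $n$. Those remaining cases I would check directly; $n=6$ fails, since $\Phi_6(2)=3$ and $3\mid 2^2-1$.
\end{itemize}

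\textbf{Step 4 ($n=2$).} Here $\Phi_2(a)=a+1$, and a prime $q$ is primitive iff $q\mid a+1$ and $q\nmid a-1$. Since $\gcd(a+1,a-1)\mid 2$, a primitive prime fails to exist iff $a+1$ is a power of $2$, that is, $a=2^s-1$ with $s\ge2$. This gives the second exceptional family.
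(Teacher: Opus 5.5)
The paper does not prove this statement; it quotes Zsigmondy's theorem from the cited reference. Your cyclotomic argument (the classical Bang--Zsigmondy route) therefore has to stand on its own.

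Steps 1, 2, 4 and the case $a\ge 3$ of Step 3 are essentially right, with small repairs:
\begin{enumerate}
\item You should justify that $n/m$ is a power of $q$. If a prime $r\ne q$ divided $n/m$, then $q\mid\Phi_n(a)\mid (a^n-1)/(a^{n/r}-1)\equiv r\pmod q$.
\item You should say why $q$ is then the largest prime factor of $n$: this is because $m\mid q-1$, not because of lifting the exponent.
\item The remark that $q=2$ ``can only occur when $n=2$'' is inaccurate, since $2\mid\Phi_4(3)=10$ while $\mathrm{ord}_2(3)=1$. What is true, and all that Step 2 needs, is $v_2(\Phi_{2^k}(a))=v_2(a^{2^{k-1}}+1)=1$ for odd $a$ and $k\ge 2$.
\end{enumerate}

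The genuine gap is the case $a=2$, which is where the real content of the theorem lies, and your sketch does not settle it. The assertion that only a bounded number of primitive roots lie near $1$ is false. Any fixed arc around $1$ contains a number of primitive $n$-th roots of unity growing roughly in proportion to $\varphi(n)$. Meanwhile the factors $|2-\zeta|^2=1+8\sin^2(\theta/2)$ from roots within $O(1/n)$ of $1$ exceed $1$ only by $O(1/n^2)$. So pairing conjugates gives no bound of the form $\Phi_n(2)>p$ without a real count of primitive roots away from $1$, which you do not supply.

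Your reduction $\Phi_n(2)=\Phi_{\mathrm{rad}(n)}(2^{n/\mathrm{rad}(n)})$ does dispose of non-squarefree $n$, since the base becomes at least $4$ and your $a\ge 3$ bound applies. Squarefree $n$, however, remain open.

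A clean fix is the M\"obius product. Discarding the factors with $\mu(n/d)=-1$, which exceed $1$, gives
\[
\Phi_n(2)=2^{\varphi(n)}\prod_{d\mid n}(1-2^{-d})^{\mu(n/d)}\ \ge\ 2^{\varphi(n)}\prod_{d\ge 1}(1-2^{-d})\ \ge\ 2^{\varphi(n)}\cdot\tfrac12\cdot\tfrac34\cdot\tfrac34\ >\ 2^{\varphi(n)-2}.
\]
Since $p\le\varphi(n)+1\le 2^{\varphi(n)-2}$ once $\varphi(n)\ge 5$, Step 2 is contradicted in those cases. This leaves only $n\in\{3,4,5,6,8,10,12\}$, where $\Phi_n(2)=7,5,31,3,17,11,13$ respectively. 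Only $n=6$ gives $\Phi_n(2)\le p$, and that case is indeed an exception, as you observed.
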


\section{Proof of Theorem \ref{thma}}
The aim of this section is to prove that a non-abelian finite simple group does not satisfy the property $P_v(p,q)$ for any prime divisors $p\neq q$ of its order. We first establish this result for some specific families of simple groups, namely, for $A_n$ ($n\ge 6$), a family of orthogonal groups of plus type, and some sporadic simple groups. Then, we prove the same for an arbitrary simple group according to the number of connected components of its prime graph.

\begin{lemma}\label{lem_alt}
Let $S\cong A_n (n\geq 9), M_{12}, J_2, HS, Suz, Ru, Co_1, Co_3, or BM$  and $p$ a prime divisor of $|S|$. Then there exists a $p'$-element $x\in S$ of prime power order and $\chi\in\mathrm{Irr}(S)$ such that $|x^S|$ is divisible by every prime divisor of $|S|$, $\chi$  extends to $\mathrm{Aut}(S)$ and $\chi(x)=0$. In particular, $S$ does not satisfy $P_v(p,q)$. 
\end{lemma}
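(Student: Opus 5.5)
The proof is by construction, case by case; the ``in particular'' clause is then formal. Suppose $x\in S$ is a $p'$-element of prime power order, $\chi\in\mathrm{Irr}(S)$ and $\chi(x)=0$. Then $x$ is vanishing in $S$. If moreover $|x^S|$ is divisible by every prime divisor of $|S|$, then for any prime $q\neq p$ dividing $|S|$ we get $q\mid |x^S|$, so $S$ fails $P_v(p,q)$. The extendibility of $\chi$ to $\mathrm{Aut}(S)$ is not needed for this conclusion. It is recorded for later use with Lemma~\ref{lem2}, which will give vanishing elements in minimal normal subgroups $S^k$ of arbitrary groups.

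\textbf{Sporadic groups.} For $M_{12},J_2,HS,Suz,Ru,Co_1,Co_3,BM$ I would read the required data off the ATLAS \cite{MR827219}. For each prime $p$ dividing $|S|$ I look for a class of $r$-elements ($r\neq p$ prime) with two properties. First, its centralizer order is coprime to $|S|/|C_S(x)|$'s complement in the right sense, namely $|C_S(x)|$ is a power of $r$ (ideally $|C_S(x)|=r^a$), so that $|x^S|=|S|/|C_S(x)|$ is divisible by every prime dividing $|S|$. Second, some rational (or $\mathrm{Aut}(S)$-invariant) character vanishes on it.

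For example, I would take elements of order $11$ in $M_{12}$ when $p\neq 11$, and elements of order $5$ or $3$ otherwise. One has to check that $|S|_r>r$ for the chosen $r$, so that $r$ still divides $|x^S|$. For a self-centralizing element of prime order $r$ this holds when $r^2\mid |S|$. Otherwise I choose an element of order $r^2$ or a different prime.

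Extendibility is handled as follows. When $\mathrm{Out}(S)=1$ ($Co_1,Co_3,Ru$ (trivial? no, $Ru$ has trivial Out), $BM$? $\mathrm{Out}(BM)=1$), every character trivially extends. When $|\mathrm{Out}(S)|=2$, it suffices that $\chi$ is $\mathrm{Aut}(S)$-invariant, because the quotient is cyclic. This reduces to checking in the ATLAS that the character is not fused with another one under the outer automorphism.

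\textbf{Alternating groups.} For $S=A_n$, $n\ge 9$, every character $\chi_\lambda$ of $S_n$ with $\lambda$ not self-conjugate restricts irreducibly to $A_n$. It therefore extends to $S_n=\mathrm{Aut}(A_n)$ (here $n\neq 6$). The element $x$ will be an $r$-cycle, or a product of $r$-cycles, for a prime $r\neq p$ chosen near $n$. By Bertrand's postulate there are primes $r$ with $n/2<r\le n$, and for $n\ge 9$ there are at least two such primes less than or equal to $n$ (possibly one at $n$ or $n-1$), so one of them differs from $p$.

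Take $x$ an $r$-cycle with $n-r$ small, for instance $n-r\le 2$ when possible, or in general accept a few fixed points. Then $|C_{A_n}(x)|=r\cdot|S_{n-r}|/2$ (or $r(n-r)!$). For every prime $s\le n$ to divide $|x^S|$, I need $s$ to have larger multiplicity in $n!$ than in $r\,(n-r)!$. This holds when $n-r<s$ or, for small $s$, by comparing Legendre's formula. The vanishing character then comes from Murnaghan--Nakayama: take $\lambda$ with no $r$-hook, e.g.\ a hook partition or a two-row partition whose hook lengths avoid $r$, and ensure $\lambda\neq\lambda'$. Then $\chi_\lambda(x)=0$.

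\textbf{Main obstacle.} The hardest step is the arithmetic for $A_n$. Making $|x^S|$ divisible by $2$ and $3$ forces $n-r$ to be small relative to the $2$- and $3$-parts, so fixed points must be limited. Equivalently, one can use an element of cycle type $(r,r')$ or $(r^k)$ consisting of $r$-cycles covering almost all points. I would first try $x$ an $r$-cycle with $r$ the largest prime $\le n$, $r\neq p$, where Bertrand-type gaps give $n-r$ small. I would then handle the remaining cases, $p$ equal to that largest prime, by using the next prime $r'$ and products of two equal cycles if necessary, checking $n\le$ some bound by hand.
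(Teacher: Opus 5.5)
Your sporadic-group step matches the paper, which simply tabulates an ATLAS class and an $\mathrm{Aut}(S)$-invariant character for $p=2$ and for $p>2$. Your sample choice, however, is wrong. In $M_{12}$ the class $11A$ is self-centralizing and $11^2\nmid |M_{12}|$, so $|x^S|=2^6\cdot 3^3\cdot 5$ is not divisible by $11$.

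The genuine gap is the alternating case, and it breaks the strategy itself, not just the bookkeeping. Bertrand's postulate and ``the largest prime $\le n$'' give an $r$-cycle $x$ with $r>n/2$. For such $x$, $|x^{S_n}|=\binom{n}{r}(r-1)!$ is prime to $r$, so $r\nmid |x^{A_n}|$. The required conclusion, that $|x^S|$ is divisible by every prime divisor of $|S|$, therefore fails at $r$ itself. Your Legendre check (``holds when $n-r<s$'') forgets the factor $r$ that $\langle x\rangle$ contributes to $|C_{S_n}(x)|=r\,(n-r)!$.

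Taking several $r$-cycles does not help. Whenever $r^2>n$, an $r$-element is a product of $k<r$ disjoint $r$-cycles. Then $|C_{S_n}(x)|=r^k\,k!\,(n-kr)!$ has $r$-part $r^{k+\lfloor (n-kr)/r\rfloor}=r^{\lfloor n/r\rfloor}=|S_n|_r$, so $C_{A_n}(x)$ contains a Sylow $r$-subgroup. The primes $2,3$ you worried about are harmless, since $(r-1)!$ already supplies them. So you are forced to use a prime $r$ with $r^2\le n$ and cycles whose lengths are higher powers of $r$.

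That is exactly what the paper does. For $p>2$ it takes $x$ of order $4$ or $8$: disjoint $4$-cycles, plus one $8$-cycle or one transposition according to $n \bmod 4$ and the parity of $k=\lfloor n/4\rfloor$. It checks that $n!/(4^k k!\cdot 2)$ is divisible by every prime $\le n$. It then makes $x$ vanish using $\sigma=\pi(\pi-3)/2+\delta$ or $\tau=(\pi-1)(\pi-2)/2-\delta$, the characters for $(n-2,2)$ and $(n-2,1,1)$, which depend only on fixed points and $2$-cycles. For $p=2$ it takes $r$ the largest prime with $r^2\le n$ and quotes an external theorem for both $x$ and $\chi$.

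Your Murnaghan--Nakayama idea for producing a vanishing $\chi_\lambda$ with $\lambda\neq\lambda'$ is sound. But it has to be applied to elements of this kind, and that is where the real work lies.
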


\begin{proof} 
For suitable prime divisors $p_1\neq p_2$ of $|S|$, we find $p_i$-element $x_i\in S$ ($i=1,2$) and $\chi_i\in\mbox{Irr}(S_i)$ such that $|x_i^S|$ is divisible by every prime divisor of $|S|$, $\chi_i(x_i)=0$, and $\chi_i$ extends to $\mbox{Aut}(S_i)$. Then, for a given prime $p$, one of the $x_i$ will be a $p'$-element satisfying the assertion.

\vskip2mm 

Let $S\cong A_n$ with $n\ge 9$. Suppose $p=2$. Let $r$ be the largest prime such that $r^2\le n$. Then $r$ is odd. By (\cite{Arora03092025}, Theorem 3.2), there exists a $r$-element $x\in S$ (which is a $2'$-element) and $\chi\in\mbox{Irr}(S)$ satisfying the assertion in the Lemma.
Suppose $p>2$. We show that there is $x\in A_n$ of order $4$ or $8$ (a $p'$-element) satisfying the assertion in the Lemma.  Consider the class functions $\pi$ and $\delta$ on $A_n$ given by 
\begin{align*}
\pi(g) & =\mbox{number of fixed points of } g,\\
\delta(g) & = \mbox{number of transpositions in disjoint-cycle decomposition of } g \hskip5mm (g\in A_n).
\end{align*}
Since $n\ge 9$, by (\cite{MR1645304}, Theorem 11.9),  the class functions $\sigma$ and $\tau$ defined below are irreducible characters of $A_n$, and they extend to $\mbox{Aut}(A_n)$: 
$$
\sigma:=\frac{\pi(\pi-3)}{2}+\delta
\quad \text{and} \quad
\tau:=\frac{(\pi-1)(\pi-2)}{2}-\delta,
$$
Now $n=4k+t$, where $t\in\{0,1,2,3\}$. Since $n\ge 9$, we get $k\ge 2$. We take $x\in A_n$ as given below: 

(i) If $k$ is even, choose $x\in A_n$ which is product of $k$ disjoint $4$-cycles. 

(ii) If $k$ is odd and $n\in \{4k, 4k+1\}$, choose $x\in A_n$ which is product of $k-2$ disjoint  $4$ cycles and an $8$-cycle.  

(iii) $k$ is odd and $n\in \{4k+2, 4k+3\}$, choose $x\in A_n$ which is product of $k$ disjoint $4$ cycles and a transposition. 

The choice of $x$ ensures that either $\sigma(x)=0$ or $\tau(x)=0$ in each case. Also, in cases (i), (ii) and (iii) respectively, 
$$
|x^{A_n}|=\frac{n!}{4^k k!}
\quad \text{or} \quad
\frac{n!}{4^{k-2}8(k-2)!}
\quad \text{or} \quad
\frac{n!}{4^k k!2}.
$$
\textbf{Claim:} Every prime divisor of $n!$ divides $|x^{A_n}|$.

Notice that (for fixed $k\ge 2$), the GCD of above three integers is the last term. Hence, it suffices to show: every prime divisor of $n!$ divides $\frac{n!}{4^kk!2}$.

Since $k!4^k$ divides $(4k)!$, we write $(4k)!=N4^kk!$, where $N$ is the product of all $c\in \{1,\ldots, 4k\}$ such that $c\not\equiv 0\pmod{4}$. 

If $l$ is prime dividing $(4k)!$, then $l\not\equiv 0\pmod{4}$, hence $l$ divides $N$. Therefore, $l$ divides $\frac{(4k)!}{4^k k!}$, which divides $\frac{n!}{4^k k!}$. Since $2\cdot 2k$ divides $N$, it follows that $l$ divides $n!/(4^k k! 2)$. 

\vskip5mm
For $S\not\cong A_n$, with the notations in \cite{MR827219}, the table below provides, for each prime divisor $p$ of $|S|$, a suitable $p'$-element $x\in S$ and $\chi\in \mathrm{Irr}(S)$, satisfying the assertion.
\begin{center}
\begin{tabular}{|c|c|c|c|c|c|c|c|c|c| } 
\hline
Prime & $S$ &  $M_{12}$ & $J_2$ & HS & Suz & Ru & $Co_1$ & $Co_3$ & BM \\\hline
$p=2$ & $x$  & $3B$  &  $3A$ & $5C$ & $9A$ & $5B$ & $3D$ & $3B$ & $5B$ \\
 & $\chi$  & $\chi_7$   & $\chi_6$ & $\chi_7$ & $\chi_4$ & $\chi_9$ & $\chi_2$ & $\chi_{21}$ & $\chi_5$ \\\hline 
$p>2$ &  $x$  & $8A$ & $2B$ & $4A$ & $8A$ & $8A$ & $4F$ & $8C$ & $8N$ \\
& $\chi$  & $\chi_{13}$ & $\chi_6$ & $\chi_{18}$ & $\chi_3$ & $\chi_2$ & $\chi_{2}$ & $\chi_{9}$ & $\chi_{13}$ \\\hline
\end{tabular}
\end{center}

\end{proof}

\begin{lemma}\label{cor2}
  Let $S\cong A_6,\, A_7,\, A_8,\, M_{22}, \text{ or }M_{24}$ and $p\neq q$ be prime divisors of $|S|$. Then there exists a $p'$-element $x\in S$ of prime power order and $\chi\in \mathrm{Irr}(S) $ such that $|x^S|$ is divisible by $q$, $\chi$ extends to $\mathrm{Aut}(S)$ and $\chi(x)=0$. In particular, $S$ does not satisfy the property $P_v(p,q)$. 
\end{lemma}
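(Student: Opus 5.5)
The plan is a direct case-by-case verification using the character tables in the ATLAS \cite{MR827219}, following the pattern of the table in Lemma \ref{lem_alt}. Unlike in Lemma \ref{lem_alt}, I do not expect to find a single element per prime whose class size is divisible by every prime divisor of $|S|$. So for each $S$ I will pick a small collection of elements $x_1,\dots,x_m$ of prime power order, together with characters $\chi_i\in\mathrm{Irr}(S)$ satisfying $\chi_i(x_i)=0$. I then check that for every ordered pair $(p,q)$ of distinct prime divisors of $|S|$, some $x_i$ is a $p'$-element with $q\mid |x_i^S|$.

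The first step is to make the extendibility condition cheap to verify. For $A_7$, $A_8$, $M_{22}$ and $M_{24}$ we have $|\mathrm{Out}(S)|\le 2$, so $\mathrm{Aut}(S)/S$ is cyclic. Hence an $\mathrm{Aut}(S)$-invariant $\chi\in\mathrm{Irr}(S)$ extends to $\mathrm{Aut}(S)$. Invariance can be read off the ATLAS directly: it means $\chi$ is not fused with another character in the extension $S.2$. For $A_6$ we have $\mathrm{Out}(A_6)\cong C_2\times C_2$, which is noncyclic. Here I use the characters of degrees $9$ and $10$. These are the only nontrivial ones invariant under all three outer automorphisms, since the two characters of degree $5$ and the two of degree $8$ are swapped. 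Their extendibility to $\mathrm{Aut}(A_6)$ would be checked from the ATLAS tables of $\mathrm{Aut}(A_6)$, where both characters appear without splitting or fusion. I expect this to be the only genuinely delicate point.

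The second step is the table search. For $A_6$, where $|A_6|=2^3\cdot 3^2\cdot 5$, the character $\chi_{10}$ of degree $10$ vanishes on $4A$ and on $5A,5B$. Moreover $|4A|=90=2\cdot 3^2\cdot 5$, $|5A|=72=2^3\cdot 3^2$, and $\chi_9$ vanishes on $3A,3B$ with class size $40=2^3\cdot 5$. This gives the following choices:
\begin{itemize}
\item for $p=3$ or $p=5$, take $x\in 4A$, which handles every $q\neq p$;
\item for $p=2$, take $x\in 3A$ when $q=5$, and $x\in 5A$ when $q=3$.
\end{itemize}
For $A_7$, $A_8$, $M_{22}$ and $M_{24}$ I proceed in the same way. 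First, try an element of $2$-power order, typically of order $4$ or $8$, with class size divisible by all odd prime divisors. Then, for $p=2$, handle each odd $q$ by an element of $r$-power order, with $r$ odd, on which some invariant character vanishes. For $r\neq q$, this element's class size is automatically divisible by $q$ whenever $q$ does not divide $|C_S(x)|$.

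These groups are collected in a separate lemma, rather than in Lemma \ref{lem_alt}, because a single element is generally insufficient. In $A_6$, for instance, no $2'$-element of prime power order has class size divisible by both $3$ and $5$. So the second step must genuinely depend on $q$.

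The main obstacle is bookkeeping rather than conceptual. For each ordered pair $(p,q)$ one must certify a concrete class with three properties: a vanishing invariant character, the correct order, and $q$ dividing $|C_S(x)|$'s index. The extendibility in the $A_6$ case, where the outer automorphism group is noncyclic, is the one place where invariance alone does not suffice and the ATLAS data for $\mathrm{Aut}(A_6)$ must be consulted. Once the table is filled in, the final assertion follows immediately, since $S$ contains a vanishing $p'$-element of prime power order whose class size is divisible by $q$.
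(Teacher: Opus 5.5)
Your strategy matches the paper's: an ATLAS table, with invariance sufficing for $A_7$, $A_8$, $M_{22}$, $M_{24}$ since $|\mathrm{Out}(S)|\le 2$. For $A_6$ your choices are exactly the paper's table: $4A$ and $5A$ with the degree-$10$ character $\chi_7$, and $3A$ with the degree-$9$ character $\chi_6$.

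The step you flagged as delicate does fail. The degree-$10$ character of $A_6$ is $\mathrm{Aut}(A_6)$-invariant and extends to each of $A_6.2_1$, $A_6.2_2$, $A_6.2_3$; that is all ``no splitting or fusion'' in the ATLAS tells you. It does \emph{not} extend to $\mathrm{Aut}(A_6)=A_6.2^2$. Its two extensions to $S_6$ are $\chi^{(4,1,1)}=\wedge^2(\text{standard})$ and $\chi^{(3,1,1,1)}=\chi^{(4,1,1)}\cdot\mathrm{sgn}$. These take the values $2,-2$ on transpositions and $-2,2$ on products of three disjoint transpositions. An element $g\in\mathrm{Aut}(A_6)\setminus S_6$ induces an outer automorphism of $S_6$ (as $C_{\mathrm{Aut}(A_6)}(S_6)=1$). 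That automorphism interchanges these two classes, and therefore interchanges the two characters. An extension to $\mathrm{Aut}(A_6)$ would restrict to a $g$-invariant extension to $S_6$, so none exists; the unique irreducible character of $\mathrm{Aut}(A_6)$ lying over it has degree $20$.

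No different choice repairs this. The degree-$9$ character does extend: it is the nontrivial constituent of the permutation character of $P\Gamma L_2(9)$ on the $10$ points of $\mathbb{P}^1(\mathbb{F}_9)$. But it is the only nontrivial member of $\mathrm{Irr}(A_6)$ that extends, since the degree-$5$ and degree-$8$ characters are not even invariant. It vanishes only on $3A\cup 3B$, whose class size is $40=2^3\cdot 5$. Hence for $S=A_6$ and $3\in\{p,q\}$, e.g.\ $(p,q)=(3,5)$ or $(2,3)$, no pair $(x,\chi)$ as in the statement exists. The extendibility clause is false for $A_6$, and the paper's proof has the same defect.

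What your argument (and the paper's) does establish for $A_6$ is the version with $\chi$ merely in $\mathrm{Irr}(A_6)$. That is all the ``in particular'' needs. The later uses of extendibility are also unaffected: $A_6$ has defect-zero characters for $2,3,5$ (degrees $8,9,5$), so it never lies in the list where that clause is invoked.

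So state the extendibility only for $A_7$, $A_8$, $M_{22}$, $M_{24}$, where your plan works; the paper's entries there are $5A$, $7A$ for $p=2$ and $4A$ or $8A$ for $p>2$. For $A_6$, prove only the failure of $P_v(p,q)$.
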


\begin{proof}  Following the notations in \cite{MR827219}, the required $x$ and $\chi$ are mentioned in the table below. 
\begin{center}
\begin{tabular}{|c|c|c|c|c|c|c|}\hline 
Primes & $S$  &  $A_6$ & $A_7$ & $A_8$ & $M_{22}$ & $M_{24}$ \\\hline 
$p=2$, $q\neq 5$  &$x$  & $5A$ & $5A$ & $5A$ & $5A$ & $5A$ \\
 & $\chi$ &  $\chi_7$ &$\chi_9$ & $\chi_{14}$ & $\chi_8$ & $\chi_{26}$ \\\hline 
 $p=2$, $q\neq 7$ & $x$ & $3A$ &$7A$ & $7A$ & $7A$ & $7A$ \\
 & $\chi$ & $\chi_6$ & $\chi_9$ & $\chi_{14}$ & $\chi_8$ & $\chi_{26}$ \\\hline
 $p>2$ & $x$ &$4A$ & $4A$ & $4A$ & $8A$ & $8A$\\
 & $\chi$ & $\chi_7$ &$\chi_2$ & $\chi_4$ & $\chi_7$ & $\chi_7$ \\\hline
 \end{tabular}
\end{center}
\end{proof}


\begin{lemma}\label{lemma_orthogonal} 
Let $S=P\Omega_n^+(p^r)$ ($p$ a prime, $r\ge 1$) with $n\ge 8$ and $n\equiv 0\pmod{4}$, and let  $q\neq p$ a prime divisor of $|S|$. Then $S$ does not satisfy $P_v(p,q)$. 
\end{lemma}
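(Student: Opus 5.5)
Here $p$ is the defining characteristic of $S=P\Omega_n^+(p^r)$ and $q\neq p$ divides $|S|$. We must find a vanishing $p'$-element $x\in S$ of prime power order with $q\mid |x^S|$. The natural tool is the Steinberg character $\mathrm{St}$ of $S$, of degree $|S|_p$, which has $p$-defect zero. Hence $\mathrm{St}(x)\neq 0$ exactly for $p$-semisimple... — more precisely, for a $p'$-element $x$ (semisimple), $\mathrm{St}(x)=\pm|C_S(x)|_p$, which is nonzero. So $\mathrm{St}$ does not help for $p'$-elements. Instead we use Lemma~\ref{Brough_van} with the \emph{other} primes.

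The plan is as follows. If $S$ has an irreducible character of $\ell$-defect zero for a prime $\ell\neq p$, then by Lemma~\ref{Brough_van} (with $N=G=S$) every element of order divisible by $\ell$ vanishes. By Granville--Ono style results, or directly via Deligne--Lusztig theory (semisimple characters $\pm R_T^S(\theta)$ for regular $\theta$ in general position of a torus $T$ whose order contains the full $\ell$-part of $|S|$), $S$ has $\ell$-defect zero characters for suitable $\ell$. Concretely, write $n=2m$ with $m$ even, $m\ge 4$, and $Q=p^r$. The maximal tori of $\Omega_{2m}^+$ of orders roughly $(Q^{m-1}+1)(Q+1)/d$ and $(Q^{m}-1)/d$ (the Coxeter torus) are relevant. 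Let $\ell_1$ be a primitive prime divisor of $Q^{2(m-1)}-1$ and $\ell_2$ a primitive prime divisor of $Q^{m}-1$; these exist by Theorem~\ref{zsigmondy}, with exceptional small cases handled separately. Each torus $T_i$ is then a cyclic Sylow-like torus, and since $C_S(x)$ of an $\ell_i$-element $x$ of $T_i$ is contained in $T_i$ (regular semisimple), we get $|x^S|=|S|/|T_i|$. A regular character $\chi=\pm R_{T_j}(\theta)$ of degree $|S|_{p'}/|T_j|$ has $\ell_i$-defect zero for $i\neq j$, because $\ell_i\nmid|T_j|$. So every element of order $\ell_1$ vanishes (using the character from $T_2$), and so does every element of order $\ell_2$.

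We now pick $x$. Both $\ell_1$-elements and $\ell_2$-elements are $p'$-elements of prime power order and are vanishing. Their class sizes are $|S|/|T_1|$ and $|S|/|T_2|$. Given $q$, it suffices that $q$ fails to divide $\gcd(|T_1|,|T_2|)$ up to the $q$-parts, that is, that $|T_i|_q<|S|_q$ for some $i$. Since $\gcd(Q^m-1,(Q^{m-1}+1)(Q+1))$ divides a small number (essentially $2(Q+1)$ when $m$ is even), for any $q$ not dividing $2(Q+1)$ one of the two works. For $q$ dividing $2(Q+1)$, the full $q$-part of $|S|$, which involves $\prod(Q^{2i}-1)$, is much larger than the $q$-part of either torus, since $m\ge4$ gives many factors $Q^{2i}-1$ divisible by $q$. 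So $q\mid |S|/|T_i|$ automatically; this comparison is a routine valuation count (LTE).

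The main obstacle is justifying the vanishing step rigorously: we must exhibit the defect-zero characters (regular $\theta$ must exist in the tori modulo the center and $d=\gcd(4,Q^m-1)$), and we must treat the Zsigmondy exceptions ($Q=2$ with $2(m-1)=6$, i.e.\ $m=4$, $Q=2$, the group $\Omega_8^+(2)$). For that exception, and possibly for $P\Omega_8^+(3)$, I would instead read off suitable classes and characters from the ATLAS, as in Lemma~\ref{cor2}. If the torus argument proves awkward, an alternative is to cite known results that simple groups of Lie type have $\ell$-blocks of defect zero for every $\ell\neq p$ (Michler, Willems), except for a few listed cases. This gives vanishing of all elements of order divisible by $\ell$ for every $\ell\neq p$; then choose $x$ an $\ell$-element with $\ell\neq q$ regular in a torus whose order has small $q$-part.
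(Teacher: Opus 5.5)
\textbf{The regularity claim for your second element fails.} You say that an element $x$ of order $\ell_2$, a primitive prime divisor of $Q^m-1$, in the torus $T_2$ of order $(Q^m-1)/d$ is regular semisimple with $C_S(x)\subseteq T_2$. This is false.

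Since $m$ is even, $Q^m-1=(Q^{m/2}-1)(Q^{m/2}+1)$, and primitivity forces $\ell_2\mid Q^{m/2}+1$. The torus of order $Q^m-1$ is the Singer torus of a Levi $\mathrm{GL}_m(Q)$ acting on $V=W\oplus W'$, with eigenvalues $\lambda^{Q^i}$ on $W$ and $\lambda^{-Q^i}$ on $W'$. Because $\lambda^{Q^{m/2}}=\lambda^{-1}$, these two eigenvalue sets coincide, so every eigenvalue has multiplicity $2$.

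The centralizer of $x$ in the orthogonal group is therefore $\mathrm{GU}_2(Q^{m/2})$, of order $Q^{m/2}(Q^{m/2}+1)(Q^m-1)$. It contains $\mathrm{SU}_2(Q^{m/2})\cong \mathrm{SL}_2(Q^{m/2})$, hence unipotent elements. For $m=2$ this is the familiar fact that in $\mathrm{SL}_2(Q)\circ\mathrm{SL}_2(Q)$ the element $(1,y)$ is centralized by a whole $\mathrm{SL}_2(Q)$ factor. So $|x^S|\neq |S|/|T_2|$, and your class-size comparison is done with the wrong centralizer. Incidentally, the Coxeter torus of type $D_m$ is the one of order $(Q^{m-1}+1)(Q+1)$, not $Q^m-1$.

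\textbf{You also miss a Zsigmondy exception.} Zsigmondy fails for $Q^m-1$ when $Q=2$, $m=6$. So for $\Omega^+_{12}(2)$ your $\ell_2$ does not exist, yet you list only $\Omega_8^+(2)$ as exceptional.

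\textbf{How to repair it.} Your $\ell_1$-element (a primitive prime divisor of $Q^{2(m-1)}-1$ in the torus of order $(Q^{m-1}+1)(Q+1)/d$) is exactly the paper's second element and is fine. Your argument can then be patched in three pieces:
\begin{itemize}
\item The $p'$-part of the true centralizer of $x$ involves only primes dividing $Q^m-1$. So it is still coprime to $\ell_1$-type primes, and $x$ still handles $q$ a primitive prime divisor of $Q^{2(m-1)}-1$.
\item For $q\mid 2(Q+1)$, your valuation count with the $\ell_1$-element alone works.
\item $\Omega^+_{12}(2)$ needs a separate element, e.g.\ one of order $31$.
\end{itemize}

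The cleaner fix, which the paper uses, is to replace your torus by the one of order $(Q^{m-1}-1)(Q-1)/d$ and take a primitive prime divisor $\ell$ of $Q^{m-1}-1$. Because $m-1$ is odd, $\ell$ divides no $Q^j+1$. Hence the eigenvalues $\lambda^{\pm Q^i}$ ($0\le i\le m-2$) are pairwise distinct, and the element has centralizer exactly the torus. Moreover, $(Q^{m-1}-1)(Q-1)/d$ and $(Q^{m-1}+1)(Q+1)/d$ are coprime. So $q$ is coprime to one of the two centralizer orders, and no valuation comparison is needed. The only Zsigmondy exception left is $\Omega_8^+(2)$, which the paper reads off the ATLAS. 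Your citation route for vanishing (defect-zero characters for every prime, so all nonidentity elements vanish) matches the paper.
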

\begin{proof} By (\cite{MR1321575}, Corollary 2), $S$ has an irreducible character of defect zero for every prime divisor of $|S|$. Hence, all the non-identity elements of $S$ are vanishing. If $S=P\Omega_8^+(2)$, then from (\cite{MR827219}, p.86), we see that the elements of order $4$ and $9$ have class size divisible by every prime divisor of $|S|$. Hence $S$ does not satisfy $P_v(p,q)$. Assume that $S\not\cong P\Omega_8^+(2)$.

For simplicity, write $t:=p^r.$
By (\cite {MR2507573}, Theorem 4.1), there exist maximal tori $T_1$ and $T_2$ in $S$ of following coprime orders:
$$
|T_1|=\frac{(t^{\frac{n}{2}-1} -1)(t-1)}{(2,t-1)^2}, \hskip10mm 
|T_2|=\frac{(t^{\frac{n}{2}-1} +1)(t+1)}{(2,t-1)^2}
$$
Let $l$ be a primitive prime divisor of $t^{(\frac{n}{2}-1)}-1$ (this exists by Theorem \ref{zsigmondy}), since $\frac{n}{2}-1\notin \{2,6\}$ for $n$ given in the hypothesis). Choose $\lambda\in\overline{\mathbb{F}}_t^*$ of order $l$ and choose $t_1\in T_1$ which  diagonalizes over $\overline{\mathbb{F}}_t$ as follows: 
\begin{align*}
t_1 &\sim \mbox{diag}(\lambda, \lambda^t, \ldots, \lambda^{t^{\frac{n}{2}-2}}, 
\lambda^{-1}, \lambda^{-t}, \ldots, \lambda^{-t^{\frac{n}{2}-2}}, 1,1).
\end{align*}
Let $k$ be a primitive prime divisor of $t^{(n-2)}-1$; this divides $t^{(\frac{n}{2}-1)}+1$ (such $k$ exists since 
$(t,n-2)\neq (2,6)$ by the assumptions made in the beginning, and $n-2\neq 2$ by the hypothesis; see Theorem \ref{zsigmondy}).  Choose $\mu\in\overline{\mathbb{F}}_t^*$ of order $k$ and choose $t_2\in T_2$ which  diagonalizes over $\overline{\mathbb{F}}_t$ as follows: 
\begin{align*}
t_2 &\sim \mbox{diag}(\mu, \mu^t, \ldots, \mu^{t^{n-3}}, 1,1).
\end{align*}
Note that $t_i$ are $p'$-elements (since $T_i$ contains semisimple elements in $S$).

The choices of $\lambda$ and $\mu$ of orders $l$ and $k$ respectively ensure that the first $n-2$ eigenvalues in the above diagonalisation of each $t_i$ are distinct. Following the proof of Lemma 4.6 in \cite{MR2507573}, we get that 
$C_S(t_i)=T_i$ for $i=1,2$. Since $(|T_1|, |T_2|)=1$, $q$ is coprime with one of $|T_i|$, hence $q$ divide $|t_i^S|$ for some $i$. Therefore, $S$ does not satisfy the property $P_v(p,q)$.
\end{proof}

For a finite group $G$, its prime graph is a graph whose vertices are the prime divisors of $|G|$, and two vertices $p_i\neq p_j$ are connected if there is an element in $G$ of order $p_ip_j$. The prime graphs of finite simple groups are well known, and we follow \cite{MR1922988}, \cite{MR617092}, and \cite{MR1015040} for our next results. The connected components of the prime graph of $G$ are denoted by $\pi_i(G)$ ($1\le i\le k$), and $\pi_1(G)$ denotes the connected component containing $2$ (provided $2$ divides $|G|$). 

\subsection*{Simple groups with at least $3$ components}
\begin{lemma}\label{lemma2.3}
Let $S$ be a non-abelian finite simple group with at least $3$ connected components in its prime graph, and $p\neq q$ be primes dividing $|S|$. Then $S$ does not satisfy $P_v(p,q)$. 
\end{lemma}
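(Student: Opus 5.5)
Our approach exploits the structure of the prime graph. The key fact for groups with disconnected prime graph (Williams, Kondrat'ev; see \cite{MR617092}, \cite{MR1015040}) is the following. For every component $\pi_i(S)$ with $i\ge 2$ there is a nilpotent (indeed abelian) Hall $\pi_i$-subgroup $H_i$ of $S$. This subgroup is isolated, i.e.\ $C_S(x)\le H_i$ for every $1\neq x\in H_i$, and in fact $C_S(x)=H_i$ when $H_i$ is abelian. Moreover $|H_i|$ is coprime to every prime outside $\pi_i$. Consequently, for a nonidentity element $x\in H_i$, the class size $|x^S|=|S|/|C_S(x)|$ is divisible by every prime of $|S|$ not lying in $\pi_i(S)$.

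The vanishing part is handled by defect zero characters. Let $r\in\pi_i(S)$ with $i\ge 2$. Since a Sylow $r$-subgroup $R$ of $S$ satisfies $C_S(x)\le H_i$ for $1\neq x\in R$, no nontrivial element of $R$ centralizes an element of order coprime to $\pi_i$. I would argue that $S$ has an $r$-block of defect zero, using the standard fact that a TI Sylow subgroup with self-centralizing nonidentity elements forces this. Alternatively, I would simply cite the classification result that a simple group has an $r$-defect zero character unless $r\in\{2,3\}$ and a short explicit list of exceptions; since the primes in $\pi_i$ for $i\ge2$ are odd and are not $3$ in the groups with at least three components, this suffices. Then Lemma \ref{Brough_van}, applied with $N=G=S$, shows that every element of $S$ of order divisible by $r$ is vanishing in $S$. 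In particular every nonidentity $r$-element of $H_i$ is a vanishing element of prime power order.

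The conclusion then comes from a choice of components. Since there are at least three components, at least two of them, say $\pi_2$ and $\pi_3$, do not contain $2$, and these are disjoint. Given primes $p\neq q$, I choose an index $j\in\{2,3\}$ with $p\notin\pi_j$ and $q\notin\pi_j$. If no single index avoids both primes, then $p$ and $q$ lie in different ones of these components (say $p\in\pi_2$, $q\in\pi_3$); in that case I choose $j=3$ only if it avoids $q$, which fails, so I would instead use a third component. This is available when there are four or more components. When there are exactly three, the third component is $\pi_1\ni 2$, and $\pi_1$ contains more than $2$ (rather $\pi_1$ is a whole component), so I handle it directly: take $r\in\pi_3$ when $q\in\pi_2$. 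Then $r$ is a $p'$-prime as long as $p\notin\pi_3$. If instead $p\in\pi_3$ and $q\in\pi_2$, I use $r\in\pi_2$ with $q\in\pi_2$, which fails; so in that case I must take an element of $\pi_1$, namely a $2$-element.

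Let me restructure the choice as a cleaner case split. If some component $\pi_j$ with $j\ge2$ contains neither $p$ nor $q$, pick a nonidentity $r$-element $x\in H_j$ with $r\in\pi_j$. This element is a $p'$-element of prime power order, it is vanishing, and $q$ divides $|x^S|$, so $P_v(p,q)$ fails. Otherwise every component $\pi_j$ with $j\geq 2$ contains $p$ or $q$. With at least two such components, this forces $p$ and $q$ to lie in distinct non-principal components $\pi_a\ni p$ and $\pi_b\ni q$. In this case I choose $r\in\pi_b$, $r\neq q$, if $\pi_b$ has another prime. Otherwise I take $x$ of order $q$ itself, and then I need $q$ dividing $|x^S|$, which fails. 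Instead I take $x$ to be an involution or a $2$-element of $S$: it is a $p'$-element since $p$ is odd, and $q\nmid|C_S(x)|$ because $q$ lies in an isolated component different from $\pi_1$. For this $x$ to be vanishing, I need a $2$-element that vanishes. I would obtain one from the Proposition of \cite{MR3918622} stated above, applied with $N=G=S$ and the prime $2$, which gives a vanishing $2$-element, and this element is nonidentity. The main obstacle is the defect zero existence for primes in the non-principal components. I would first try the TI/self-centralizing argument, and fall back on the Granville--Ono and Michler--Willems results.
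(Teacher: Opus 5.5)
You have a genuine gap in the vanishing step. You justify the existence of $r$-defect zero characters, for $r$ in a non-principal component, by claiming that $3$ never lies in a non-principal component when there are at least three components. That claim is false:
\begin{itemize}
\item for $A_5$ and $A_6$ the components are $\{2\},\{3\},\{5\}$;
\item for $\mathrm{PSL}_2(8)$ they are $\{2\},\{3\},\{7\}$;
\item for $\mathrm{PSL}_3(4)$ they are $\{2\},\{3\},\{5\},\{7\}$.
\end{itemize}
These groups do happen to have $3$-defect zero characters. So the conclusion can be rescued by checking each such group against the $r=3$ exceptions of Granville--Ono/Michler--Willems ($Suz$, $Co_3$ and certain alternating groups), but your stated reason does not establish it.

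Your other option also fails. A TI Sylow $r$-subgroup whose nonidentity elements are self-centralizing does not force an $r$-block of defect zero. In the Frobenius group $C_7\rtimes C_3$ the normal Sylow $7$-subgroup has both properties, yet no character degree is divisible by $7$.

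The repair is already in your proposal. The Proposition you invoke for the prime $2$ (or Theorem~\ref{Thm_normal}) applies to every prime $t$ dividing $|S|$, because a non-abelian simple group has no normal Sylow $t$-subgroup. This gives a vanishing $t$-element directly, which is exactly how the paper argues. With it you need neither defect-zero characters nor isolated Hall subgroups. The condition $q\nmid |C_S(x)|$ for a $t$-element $x$ follows straight from the prime graph: an element of order $q$ in $C_S(x)$ would produce an element of order $tq$.

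Second, drop the subcase where $p\in\pi_a$, $q\in\pi_b$ ($a,b\ge 2$) and you take an $r$-element with $r\in\pi_b\setminus\{q\}$; it fails. Since $H_b$ is nilpotent, an $r$-element of $H_b$ centralizes the Sylow $q$-subgroup of $H_b$, which is a Sylow $q$-subgroup of $S$. Hence $q\nmid |x^S|$. The $2$-element choice you end with works uniformly in this case, because $\pi_1$ differs from both $\pi_a$ and $\pi_b$ and $p$ is odd.

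With these two fixes, your case split is precisely the paper's argument. Either some non-principal component contains neither $p$ nor $q$, or you fall back on $\pi_1$; in both cases you are choosing $t$ from a component containing neither $p$ nor $q$.
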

\begin{proof}
Let $\pi_i(S)$ ($i=1,\ldots,k$) denote the connected components of the prime graph of $S$. For $p\in \pi_i(S)$ and $q\in \pi_j(S)$, choose $t\in \pi_k(S)$ where $k\notin \{i,j\}$. If all the $t$-elements in $S$ are non-vanishing, then by Theorem \ref{Thm_normal}, $S$ has a normal Sylow $t$-subgroup, a contradiction. Hence, $S$ contains a vanishing $t$-element say $x$, and since $q,t$ are in different connected components, $q\mid |x^S|$.      
\end{proof}

\subsection*{Simple groups with 2 components}
By (\cite{MR1922988}, p.95) the non-abelian simple groups whose prime graph has 2 connected components are 

(i) $A_n$ for \textit{some} $n>6$; (ii) some simple groups of Lie type; (iii) the sporadic groups $M_{12}$,$J_2$, $Co_1$, $Co_3$, $McL$, $He$, $HN$, $Fi_{22}$, $Ru$. 

\vskip5mm
It should be noted that not all the alternating groups or simple groups of Lie type have exactly two connected components in their prime graph.

\begin{lemma}\label{lemma2.4}
Let $S$ be a non-abelian finite simple group whose prime graph has two connected components, and $p\neq q$ be primes dividing $|S|$. Then $S$ does not satisfy $P_v(p,q)$. 
\end{lemma}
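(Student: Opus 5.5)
We imitate the proof of Lemma \ref{lemma2.3}, using the two components $\pi_1(S)$ (containing $2$) and $\pi_2(S)$. The idea is to produce a vanishing element of prime power order whose class size is divisible by every prime of the other component. If $t\in\pi_j(S)$ and $x$ is a $t$-element, then $C_S(x)$ contains no element of order $r$ for any prime $r\notin\pi_j(S)$. Indeed, a nontrivial $r$-element $y\in C_S(x)$ would make $xy$, suitably powered, an element of order $tr$, joining $t$ and $r$ in the prime graph. Hence every prime outside $\pi_j(S)$ divides $|x^S|$. By Theorem \ref{Thm_normal}, for each prime $t$ dividing $|S|$ there is a vanishing $t$-element, since $S$ is simple and has no normal Sylow $t$-subgroup. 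So we need, for given $p\neq q$, a prime $t\neq p$ lying in the component that does \emph{not} contain $q$.

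\textbf{Case 1: $p$ and $q$ lie in different components.} Take $t:=p'$-prime in the component of $p$. If that component has a prime $t\neq p$, choose a vanishing $t$-element $x$. Then $x$ is a $p'$-element of prime power order and $q\mid |x^S|$, so we are done.

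\textbf{Case 2: $p$ and $q$ lie in the same component.} Pick any $t$ in the other component; then $t\neq p$. A vanishing $t$-element $x$ is a $p'$-element and $q\mid|x^S|$, so we are done.

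The remaining situation is Case 1 with the component of $p$ equal to $\{p\}$. Then every $p'$-element of prime power order is a $t$-element with $t$ in the component of $q$, and the component argument gives nothing. This is the main obstacle. A singleton component other than $\pi_1$ occurs, for example, when $\pi_2(S)$ consists of a single prime such as $n$ or $n-2$ for $A_n$, or a primitive prime divisor for Lie type groups. A singleton $\pi_1=\{2\}$ occurs for $PSL_2(2^f)$, although that group has three components. Since $S$ has exactly two components, $q$ lies in the other one. We need a vanishing $p'$-element of prime power order with $q$ dividing its class size, and we would produce it directly from the classification in \cite{MR1922988}, \cite{MR617092}, \cite{MR1015040}:

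\begin{itemize}
\item For $A_n$, and for the sporadic groups $M_{12},J_2,Co_1,Co_3,Ru$, Lemmas \ref{lem_alt} and \ref{cor2} already give elements whose class sizes are divisible by all primes. The groups $A_7$ and $A_8$ are covered by Lemma \ref{cor2}.
\item For $McL,He,HN,Fi_{22}$, we would read off from the ATLAS a $2$-element or $3$-element $x$ with a zero of some irreducible character on $x$ and with the singleton prime dividing $|x^S|$.
\item For Lie type groups, a singleton $\pi_2=\{r\}$ typically means a Sylow $r$-subgroup is a cyclic torus $T$ that is self-centralizing on nontrivial elements. We would use a characteristic-$\ell$ defect zero character, namely the Steinberg character of $\ell$-defect zero for the defining characteristic $\ell$. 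By Lemma \ref{Brough_van}, every element of order divisible by $\ell$ is then vanishing. If $p\neq\ell$, take a unipotent element (an $\ell$-element) whose centralizer is an $\ell$-group times a small torus of order prime to $r$, such as a regular unipotent element; then $r=q$ divides its class size. If $p=\ell$, take instead a semisimple element in another maximal torus of order coprime to $r$ with centralizer that torus. This is in the spirit of Lemma \ref{lemma_orthogonal}, using primitive prime divisors (Theorem \ref{zsigmondy}) and the existence of defect-zero characters for every prime. Such characters exist by \cite{MR1321575} for most groups of Lie type, so all non-identity elements are vanishing.
\end{itemize}

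The family-by-family verification in the last bullet is where most of the work lies.
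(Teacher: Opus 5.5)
Your Cases 1 and 2 are correct. They are exactly the component arguments the paper uses: cases (i)--(ii) for $McL,He,HN,Fi_{22}$, and Case I for groups of Lie type. Getting vanishing from Theorem \ref{Thm_normal} instead of from defect-zero characters is fine.

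The gap is the remaining case, where the component of $p$ is $\{p\}$. This is where the lemma has real content, and you only sketch it. Worse, your sketches aim at the wrong prime. In this case $p$ is the isolated prime and $q\in\pi_1(S)$. For any $x$ of prime power order coprime to $p$, the divisibility $p\mid |x^S|$ is automatic and tells you nothing. What you need is $q\mid |x^S|$ for each $q\in\pi_1(S)$, and this includes $q=2$ and, for Lie type, the defining characteristic $\ell$. Your sporadic criterion, a $2$- or $3$-element ``with the singleton prime dividing $|x^S|$'', would for instance accept the class $2A$ of $McL$ with $p=11$. Its size is $3^4\cdot 5^2\cdot 11$, which is divisible neither by $q=2$ nor by $q=7$. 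The paper instead picks the classes $4A,4B,4C,2C$ of $McL,He,HN,Fi_{22}$, whose sizes are divisible by every prime divisor of $|S|$.

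Your Lie-type bullet has the same confusion. There $r$ is the isolated prime, so $r=p$, and $p\neq\ell$ because $\ell\in\pi_1(S)$. The line ``$r=q$ divides its class size'' therefore conflates $p$ with $q$, and the subcase $q=\ell$ is never addressed.

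The missing argument is the paper's Case II, which is uniform and needs no prime-graph information or family-by-family check:
\begin{enumerate}
\item Assume $p\neq\ell$. All nonidentity elements of $S$ vanish, since $S$ has defect-zero characters for every prime.
\item If $q=\ell$: a Sylow $\ell$-subgroup $U$ satisfies $C_S(U)\le U$. So no nontrivial semisimple element centralizes a Sylow $\ell$-subgroup, and every $s$-element with $s\notin\{p,\ell\}$ has class size divisible by $\ell$.
\item If $q\neq\ell$: the centralizer of a regular unipotent element $u$ contains no nontrivial semisimple element. Hence $C_S(u)$ is an $\ell$-group and $q\mid |u^S|$.
\end{enumerate}
So a regular unipotent element is the right choice when $q\neq\ell$, but the reason you give for it concerns the wrong prime. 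Without the correct target prime and the $q=\ell$ case, the proposal does not establish the lemma.
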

\begin{proof}
As mentioned before the Lemma, $S$ is among the families (i), (ii), and (iii) above. By Lemma \ref{lem_alt} and Lemma \ref{cor2}, we many assume that $S\not\cong A_n$, and $S\not\cong M_{12}, J_2, Co_1, Co_3, Ru$. Thus, 
\begin{center}
$S$ is a simple group of Lie type or $S\cong McL, He, HN$ or $Fi_{22}$. 
\end{center}
Each $S$ above has an irreducible character of defect zero for every prime divisor of $|S|$. Hence, by (\cite{MR1280461}, Theorem 8.17), all the non-identity elements of $S$ are vanishing. Let $\pi_1(S)$ and $\pi_2(S)$ be the connected components of the prime graph of $S$, with $2\in \pi_1(S)$.

Suppose $S\cong McL, He, HN$ or $Fi_{22}$. By (\cite{MR1922988}, Table 1a), $\pi_2(S)=\{d\}$ for some odd prime $d$ dividing $|S|$. Note that $|\pi_1(S)|\ge 2$. Since $p\neq q$, at most one of $p,q$ can lie in $\pi_2(S)$.

(i) $q\in\pi_2(S)$ (i.e. $q=d$): then for any $s$-element for $s\in\pi_1(S)\setminus \{p\}$, we have $q\mid |s^S|$.  

(ii) $q\in \pi_1(S)$ but $p\notin \pi_2(S)$ (i.e. $p,q\in \pi_1(S)$): then for any $d$-element $t\in S$, we have $q\mid|t^S|$.   

(iii) $q\in \pi_1(S)$ and $p\in\pi_2(S)$ (i.e. $p=d$): in this case, with the notations as in \cite{MR827219}, we choose a $2$-element $x$ in $S$ from classes  $4A, 4B, 4C,$ and $2C$ in the groups $McL, He, HN,$ and $Fi_{22}$ respectively. From \cite{MR827219}, we get that $|x^S|$ is divisible by every prime divisor of $|S|$. Hence $S$ does not satisfy $P_v(p,q)$
\vskip5mm
Suppose $S$ is a simple group of Lie type and $U$ be a unipotent subgroup of $S$. Note that $U$ is a Sylow subgroup of $S$ and $C_S(U)\subseteq U$.

\textbf{Case I.} $p$ divides $|U|$.

Here, $p$ is the characteristic of the field of definition of $S$. Hence, by (\cite{MR1922988}, Table 1), $p\in\pi_1(S)$.  

{\bf (a)} If $q\in\pi_1(S)$, then for any $s\in \pi_2(S)$, an $s$-element has class size divisible by $q$. 

{\bf (b)} If $q\in \pi_2(S)$, then for any $s\in\pi_1(S)\setminus\{p\}$, any $s$-element has class size divisible by $q$.  

In both cases, we get that $S$ does not satisfy the property $P_v(p,q)$.

\vskip3mm
\textbf{Case II.} $p$ is coprime with $|U|$.

Assume that $q\mid |U|$. Then $U$ is a Sylow $q$-subgroup of $S$. For any prime $s\notin \{p,q\}$ dividing $|S|$,  let $x\in S$ be an $s$-element; it is semisimple. If $q\nmid |x^S|$, then $C_S(x)\supseteq gUg^{-1}$ (for some $g\in S$), i.e. $C_S(g^{-1}xg)\supseteq U$, i.e. $g^{-1}xg \in C_S(U)\subseteq U$, a contradiction. Hence, $q\mid |x^S|$ and $S$ does not satisfy the property $P_v(p,q)$. 

Assume $q$ is coprime with $|U|$ and $u\in U$ be a regular unipotent element (it is a $p'$-element). By (\cite{MR794307}, Proposition 5.1.5), the only semisimple element in $C_S(u)$ is $1$. If $q\nmid |u^S|$, then $C_S(u)$ contains a Sylow $q$-subgroup of $S$, and since $q$-elements are semisimple, we arrived at a contradiction. Hence $q||u^S|$ and $S$ does not satisfy the property $P_v(p,q)$.  
\end{proof}

\subsection*{Simple groups with one connected component}
By (\cite{MR617092}, Table Ia) and (\cite{MR1015040}, Table 1), the only non-abelian simple groups whose prime graph is connected are 

(i) $A_n$ for some $n\geq 5$; (ii) some classical simple groups; (iii) $E_7(t)$ with $t>3$. 

\vskip3mm 
Note that not all $A_n$ or classical simple groups have connected prime graphs (for example, $A_p$ where $p$ is a prime, and $\mathrm{PSL}_2(q)$ where $q-1$ or $q+1$ is a prime). 
 
\begin{lemma}\label{lemma2.6}
Let $S$ be a non-abelian finite simple group whose prime graph is connected, and $p\neq q$ be prime divisors of $|S|$. Then $S$ does not satisfy the property $P_v(p,q)$. 
\end{lemma}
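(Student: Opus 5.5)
The simple groups with connected prime graph are $A_n$ ($n\ge 5$), certain classical groups, and $E_7(t)$ with $t>3$. I would go through these three families, reusing the techniques of Lemmas \ref{lem_alt}, \ref{cor2}, \ref{lemma_orthogonal} and \ref{lemma2.4}.

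\textbf{Alternating groups.} For $A_n$ with $n\ge 9$, Lemma \ref{lem_alt} applies directly. For $n\in\{6,7,8\}$, Lemma \ref{cor2} applies. The case $A_5\cong \mathrm{PSL}_2(4)$ has a disconnected prime graph, so it does not occur here.

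\textbf{Groups of Lie type: vanishing.} By Granville--Ono (for $\mathrm{PSL}_n$) and by Michler/Willems (for the other groups of Lie type), $S$ has an irreducible character of $s$-defect zero for every prime $s$ dividing $|S|$. The Steinberg character handles the defining characteristic; for the remaining primes one uses the cited results, or \cite{MR1321575} as in Lemma \ref{lemma_orthogonal}. Then, by (\cite{MR1280461}, Theorem 8.17) or Lemma \ref{Brough_van} with $N=G=S$, every non-identity element of $S$ vanishes. So it suffices to find a non-identity $p'$-element of prime power order whose class size is divisible by $q$.

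\textbf{Groups of Lie type: the class-size argument.} Let $r$ be the defining characteristic and $U$ a Sylow $r$-subgroup, so that $C_S(U)\subseteq U$. The argument of Case II of Lemma \ref{lemma2.4} never used disconnectedness of the prime graph, so it carries over verbatim.
\begin{enumerate}
\item If $p\ne r$ and $q=r$, pick an $s$-element $x$ with $s\notin\{p,q\}$; such an $s$ exists since $|\pi(S)|\ge3$. If $q\nmid|x^S|$, then a conjugate of $x$ centralizes $U$ and so lies in $U$, which is impossible. Hence $q$ divides $|x^S|$.
\item If $p\ne r$ and $q\ne r$, take a regular unipotent element $u$. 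By (\cite{MR794307}, Proposition 5.1.5), $C_S(u)$ contains no non-trivial semisimple element, hence no $q$-element. Thus $q\mid|u^S|$, and $u$ is a $p'$-element of $r$-power order.
\end{enumerate}

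\textbf{The main obstacle: $p=r$.} In this case we need a semisimple element of prime power order whose centralizer has order prime to $q$. Since the prime graph is connected, we cannot simply use another component. I would instead copy Lemma \ref{lemma_orthogonal}: exhibit two maximal tori $T_1,T_2$ of coprime orders (taken from the tables in \cite{MR2507573} for classical groups, and from the known list of maximal tori for $E_7(t)$). In each torus, choose an element $t_i$ of order a Zsigmondy primitive prime divisor (via Theorem \ref{zsigmondy}), so that $t_i$ is regular and $C_S(t_i)=T_i$. Since $q$ is coprime to $|T_i|$ for some $i$, we get $q\mid|t_i^S|$.

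Typical torus pairs are:
\begin{itemize}
\item for $\mathrm{PSL}_n$: $(t^n-1)/((t-1)d)$ and $(t^{n-1}-1)/d$;
\item for $\mathrm{PSU}_n$: the analogous pair with $t^n-(-1)^n$ and $t^{n-1}-(-1)^{n-1}$;
\item for $\mathrm{PSp}_{2m}$ and $\Omega_{2m+1}$: $(t^m+1)$ and $(t^m-1)$, divided by $(2,t-1)$;
\item for $P\Omega^{\pm}$ of the remaining dimensions: pairs like those in Lemma \ref{lemma_orthogonal};
\item for $E_7(t)$: tori of orders roughly $(t+1)(t^6-t^3+1)$ and $(t-1)(t^6+t^3+1)$.
\end{itemize}

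Two points need care. First, the coprimality of the two orders (up to the center) must be checked in each case. Second, small exceptions where the Zsigmondy prime fails to exist must be treated separately by direct inspection of the ATLAS, as was done for $P\Omega_8^+(2)$; examples are $t=2$ with exponent $6$, and exponent $2$ with $t$ a Mersenne prime. I expect this bookkeeping to be the bulk of the work.
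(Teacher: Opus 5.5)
Your plan is essentially the paper's proof. It handles the alternating groups via Lemmas \ref{lem_alt} and \ref{cor2}, uses defect-zero characters to make every non-identity element vanish, and applies Case II of Lemma \ref{lemma2.4} when $p$ is not the defining characteristic; in defining characteristic it uses two maximal tori of coprime orders containing Zsigmondy elements, and it checks the few Zsigmondy exceptions directly (the paper treats $\mathrm{PSp}_6(4)$ and $\mathrm{PSp}_{12}(2)$, the latter with GAP since it is not in the ATLAS).

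The differences are minor. You rightly observe that $A_5$ does not occur, and your $E_7$ tori of orders $(t-1)\Phi_9(t)/d$ and $(t+1)\Phi_{18}(t)/d$ work as well as the paper's $(t^7\pm1)/d$. One correction: for $\mathrm{P\Omega}^-_{2m}$, and for $\mathrm{P\Omega}^+_{2m}$ with $m$ odd, pairs of the Lemma \ref{lemma_orthogonal} shape built from $t^{m-1}\pm1$ and $t\pm1$ are in general not coprime. They share the factor $t-1$ in the minus case, and $t+1$ in the plus case, since $t+1$ divides $t^{m-1}-1$ when $m$ is odd. So you should use the paper's pairs $t^m+1,\ (t^{m-1}+1)(t-1)$ and $t^m-1,\ (t^{m-1}+1)(t+1)$, each divided by the appropriate $d$.
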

\begin{proof} The simple groups with connected prime graphs are among the families (i)-(iii) mentioned before the lemma.  

If $S\cong A_5$, then for $p=2$ take $x\in A_5$ from conjugacy classes of elements of order $3$ and $5$. Their conjugacy class size is $2^2\cdot 5$ and $2^2\cdot 3$ respectively. For $p=3$, take $x\in A_5$ from conjugacy class of elements of order $2$ and $5$. Their conjugacy class size is $3\cdot 5$ and $2^2\cdot 3$ respectively. 
For $p=5$, take $x\in A_5$ from conjugacy class of elements of order $2$ and $3$. Their conjugacy class size is $3\cdot 5$ and $2^2\cdot 5$ respectively. It follows that $A_5$ do not satisfy $P_v(p,q)$ for any $\{p,q\}$. By Lemma \ref{lem_alt} and Lemma \ref{cor2}, we may assume that $S\not\cong A_n$ ($n\geq 6$). 
If $S$ is a simple group of Lie type defined over a field of characteristic other than $p$, then the arguments in Case II of Lemma \ref{lemma2.4} (which do not rely on the prime graph of $S$) imply that $S$ does not satisfy the property $P_v(p,q)$. Thus, we assume that 
\begin{center}
$S$ is a classical simple group over a field of characteristic $p$ \hskip5mm or \hskip5mm $S\cong E_7(p^r)$ 
\end{center}
By Lemma \ref{lemma_orthogonal}, we may assume that $S\not \cong P\Omega_n^+(p^r)$ where $n\equiv 0\pmod{4}$ and $n\ge 8$. By (\cite{MR2507573}, Theorem 4.1), there exist two maximal cyclic tori $T_1$ and $T_2$ of coprime orders in $S$, whose orders are as below: 

\vskip4mm
\begin{center}
\renewcommand{\arraystretch}{1.4}
\begin{tabular}{|c|c|c|c|c|}
\hline
$S$ & Conditions & $d$ & $|T_1|$ & $|T_2|$\\
\hline
\multirow{2}{*}{$\mathrm{PSL}_n(p^r)$}
 & $n\ge 2$; & \multirow{2}{*}{$(n,p^r-1)$}
 & \multirow{2}{*}{$\dfrac{1}{d}\dfrac{p^{nr}-1}{p^r-1}$}
 & \multirow{2}{*}{$\dfrac{1}{d}\left(p^{(n-1)r}-1\right)$}\\
 & $p^r\ge 4$ if $n=2$ & & & \\
\hline
\multirow{2}{*}{$\mathrm{PSU}_n(p^r)$}
 & $n\ge 3$ odd;  & \multirow{2}{*}{$(n,p^r+1)$}
 & \multirow{2}{*}{$\dfrac{1}{d}\dfrac{p^{nr}+1}{p^r+1}$}
 & \multirow{2}{*}{$\dfrac{1}{d}\left(p^{(n-1)r}-1\right)$}\\
 & $p^r\ge 3$ if $n=3$ & & & \\
\hline
$\mathrm{PSU}_n(p^r)$ & $n\ge 4$ even & $(n,p^r+1)$
 & $\dfrac{1}{d}\dfrac{p^{nr}-1}{p^r+1}$ & $\dfrac{1}{d}\left(p^{(n-1)r}+1\right)$\\
\hline
\multirow{2}{*}{$\mathrm{PSp}_n(p^r)$}
 & $n\ge 4$ even; & \multirow{2}{*}{$(2,p^r-1)$}
 & \multirow{2}{*}{$\dfrac{1}{d}\left(p^{\frac{nr}{2}}+1\right)$}
 & \multirow{2}{*}{$\dfrac{1}{d}\left(p^{\frac{nr}{2}}-1\right)$}\\
 &$p^r\ge 3$ if $n=4$ & & & \\
\hline
$\mathrm{\Omega}_n(p^r)$ & $n\ge 7$ odd;  & \multirow{2}{*}{$2$}
 & \multirow{2}{*}{$\dfrac{1}{d}\left(p^{\frac{(n-1)r}{2}}+1\right)$} & \multirow{2}{*}{$\dfrac{1}{d}\left(p^{\frac{(n-1)r}{2}}-1\right)$}\\
 &$p^r$ odd&&&\\
\hline
$\mathrm{P\Omega}_n^{-}(p^r)$ & $n\ge 8$ even & $(4,p^{\frac{nr}{2}}+1)$
 & $\dfrac{1}{d}\left(p^{\frac{nr}{2}}+1\right)$
 & $\dfrac{1}{d}\left(p^{\frac{nr}{2}-1}+1\right)(p^r-1)$\\
\hline
\multirow{2}{*}{$\mathrm{P\Omega}_n^{+}(p^r)$}
 & $n\ge 10$, & \multirow{2}{*}{$(4,p^{\frac{nr}{2}}-1)$}
 & \multirow{2}{*}{$\dfrac{1}{d}\left(p^{\frac{nr}{2}}-1\right)$}
 & \multirow{2}{*}{$\dfrac{1}{d}\left(p^{\frac{nr}{2}-1}+1\right)(p^r+1)$}\\
 & $n\equiv 2\pmod 4$ & & & \\
\hline
$\mathrm{E}_7(p^r)$ & & $(2,p^r-1)$ & $\dfrac{1}{d}\left(p^{7r}-1\right)$ & $\dfrac{1}{d}\left(p^{7r}+1\right)$\\
\hline
\end{tabular}
\end{center}

\vskip5mm
Let $T_j=\langle x_j\rangle$ for $j\in\{1,2\}$. Since $(p,|T_j|)=1$, the elements of each of $T_j$ are $p'$-elements. 

Now $q\neq p$, i.e. $q$ is different from the characteristic of the field for $S$. Since $|T_1|$ and $|T_2|$ are coprime, $q$ is coprime with one of these orders, say $$(q,|T_i|)=1.$$ 
For each $S$ in the above table, the numerator in the expression of $|T_i|$ is of the form $p^c \pm 1$, where $c\ge 1$. In case of $\mathrm{P\Omega}_n^{+}(p^r)$ and $\mathrm{P\Omega}_n^{-}(p^r)$, we take the numerator of $|T_2|$ to be $p^{\frac{nr}{2}-1}+1$. 
\vskip2mm
\noindent\textbf{Case 1.} The numerator in the expression of $|T_1|$ and $|T_2|$ have a primitive prime divisor:

\noindent Choose $a_i\in T_i$ such that $o(a_i)$ is a primitive prime divisor of the numerator in the expression of $|T_i|$. Let $\widetilde{S}$ be the quasi-simple group corresponding to $S$ (so $\widetilde{S}/Z(\widetilde{S})=S$). Let $\widetilde{T}_i\le \widetilde{S}$ be such that $\widetilde{T}_i/Z(\widetilde{S})=T_i$.
The primitivity of $o(a_i)$ ensures that 
$$
(o(a_i), |Z(\widetilde{S})|)=1.
$$
Thus, we choose $\tilde{a_i}\in \widetilde{S}$ of order equal to $o(a_i)$, and  it is easy to see that
$$
C_S(a_i)=C_{\widetilde{S}}(\tilde{a}_i)/Z(\widetilde{S}).
$$
In particular, $|a_i^S|=|\tilde{a_i}^{\widetilde{S}}|$. By (\cite{MR2507573}, Lemma 4.6), $C_{\widetilde{S}}(\tilde{a}_i)=\widetilde{T}_i$. It follows that 
$$
|C_{S}(a_i)|=|T_i|
$$
which is coprime with $q$. Therefore, $q$ divides $|a_i^S|$, where $a_i$ is a $p'$-element of prime order.

\vskip5mm

\noindent\textbf{Case 2.} The numerator in the expression of $|T_1|$ or $|T_2|$ has no primitive prime divisor:

By Theorem \ref{zsigmondy}, $p^c+1$ has no primitive prime divisor if and only if
$$
(p=2,  \,\,\, 2c=6) \hskip1cm\mbox{ or } \hskip1cm (p=2^s-1 \mbox{ where }s\ge 2, \,\,\, 2c=2);
$$
and  $p^c-1$ has no primitive prime divisor if and only if
$$
(p=2,  \,\,\, c=6) \hskip1cm\mbox{ or } \hskip1cm (p=2^s-1 \mbox{ where }s\ge 2, \,\,\, c=2).
$$ 
For each $S$ in the table above, with the given conditions, Case $2$ occurs only for the following groups:

 \begin{center}
\begin{tabular}{|c|c|c|}\hline 
$S$ & $(n,r)$ & Condition on $p$\\\hline
$\mathrm{PSL}_n(p^r)$ & $(2,1), \,(2,2),\,(3,1)$ & $p=2^s-1$ with $s\geq 2$ \\
& $(2,3),\,(2,6),\,(3,2),\,(3,3),\,(4,2),\,(6,1),\, (7,1),$ & $p=2$ \\\hline
$\mathrm{PSU}_n(p^r)$ & $(3,1)$ & $p=2^s-1$ with $s\geq 2$\\
& $(3,2),\,(4,1),\,(6,1),\,(7,1)$ & $p=2$ \\\hline
$\mathrm{PSp}_n(p^r)$ & $(4,1)$ & $p=2^s-1$ with $s\geq 2$ \\
& $(4,3),\,(6,1),\,(6,2),\,(12,1)$ & $p=2$ \\\hline
\end{tabular}
 \end{center}
Each of the above groups, except $\mathrm{PSp}_6(4)$ and $\mathrm{PSp}_{12}(2)$, have disconnected prime graph by (\cite{MR1922988}, Tables 1a-1c). 

If $S=\mathrm{PSp}_6(4)$, then $|T_1|=2^6+1=65$, and $|T_2|=2^6-1=63$. Choose $x_1\in T_1$ of order $13$ and $x_2\in T_2$ of order $9$. Then $C_S(x_i)=T_i$, and $q$ divides $|x_i^S|$ for some $i\in \{1,2\}$.  Hence $S$ does not satisfy the property $P_v(p,q)$. 

If $S=\mathrm{PSp}_{12}(2)$, then by \cite{GAP4}, $|S|=2^{36}3^{8}5^37^2\cdot 11\cdot 13\cdot 17\cdot 31$. Consider the elements in $S$ of orders $31,17$ and $13$. By \cite{GAP4}, the prime divisors of their conjugacy class sizes in $S$ are as below: 
\begin{center}
\begin{tabular}{|c|c|}\hline 
Order of $a\in S$ & Prime divisors of $|a^S|$ \\\hline 
$31$ & $2,3,5,7,11,13,17$ \\
$17$ & $2,3,5,7,11,13,31$ \\ 
$13$ & $2,3,5,7,11,17,31$\\\hline 
\end{tabular}
\end{center}
Now $q\neq 2$ (since $p=2$). With $(p_1,p_2,p_3)=(31,17,13)$, we observe that among $p_1,p_2,p_3$, 

- at least two primes, say $p_i,p_j$ are coprime to $p$; 

- the conjugacy class size of $p_i$-elements or $p_j$-elements is divisible by $q$.

All the elements of $S\setminus\{1\}$ are vanishing. Hence $S$ does not satisfy the property $P_v(p,q)$. 
\end{proof}

\noindent\textbf{Theorem \ref{thma}}\label{prop P(p,q)}
Let $S$ be a non-abelian simple group and $p\neq q$ be primes dividing $|S|$. Then $S$ does not satisfy the property $P_v(p,q)$.
\begin{proof}
This follows from Lemma \ref{lemma2.3}, \ref{lemma2.4}, and \ref{lemma2.6}.
\end{proof}

\section{Proof of Theorem \ref{thmb}}
This section begins with some lemmas that will consequently be used in the proof of our main theorem.
\begin{lemma}\label{cor:vanishing_minimal_normal_combined}
Let $G$ be a finite group and $M=S_1\times \cdots \times S_k$ a minimal normal subgroup of $G$, where each $S_i\cong S$ and $S$ is a non-abelian simple group. Then the following holds.
\begin{enumerate}
\item For every prime divisor $q$ of $|S_i|$, there exists $x\in S_i$ of prime power order which is vanishing in $G$ and whose class size in $G$ is divisible by $q$.

\item For every prime divisor of $|S_i|$, there exists a $p$-element $x_i\in S_i$ which is vanishing in $G$.
\end{enumerate}
\end{lemma}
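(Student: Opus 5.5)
Part 2 comes first, since part 1 will use it. Fix a prime $p$ dividing $|S_i|$. Since $M$ is a non-abelian minimal normal subgroup of $G$, the Proposition quoted from \cite{MR3918622} (Proposition 2) gives a $p$-element $y\in M$ vanishing in $G$, say $\chi(y)=0$ for some $\chi\in\mathrm{Irr}(G)$. We want such an element inside a single factor $S_i$. There are two ways to get it. The first is to inspect how that proposition is proved: one takes $\theta\in\mathrm{Irr}(S)$ of $p$-defect zero when it exists, and otherwise a character extending to $\mathrm{Aut}(S)$ that vanishes on a $p$-element. The second is to argue directly with Lemma \ref{lem2} and Lemma \ref{Brough_van}.

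Concretely, if $S$ has a character $\theta$ of $p$-defect zero, then $\theta\times\cdots\times\theta\in\mathrm{Irr}(M)$ has $p$-defect zero. By Lemma \ref{Brough_van} applied to $N=M$, every element of $M$ of order divisible by $p$ is vanishing in $G$. In particular any nontrivial $p$-element of $S_i$ is vanishing in $G$.

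If $S$ has no $p$-defect zero character, we use the classification of such $S$ (Granville--Ono and Michler/Willems): $p\in\{2,3\}$ and $S$ is alternating or sporadic. For these $S$ we pick $\theta\in\mathrm{Irr}(S)$ extending to $\mathrm{Aut}(S)$ and a $p$-element $x\in S$ with $\theta(x)=0$; Lemmas \ref{lem_alt} and \ref{cor2} already supply many such pairs. Lemma \ref{lem2} then extends $\psi=\theta\times\cdots\times\theta$ to some $\hat\psi\in\mathrm{Irr}(G)$. For $x$ placed in the $i$-th coordinate, $\hat\psi(x)=\psi(x)=\theta(x)\theta(1)^{k-1}=0$. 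Hence $x\in S_i$ is vanishing in $G$. This proves part 2, and it also shows that the elements produced are in fact killed by a $G$-character restricting to $\theta^{\times k}$.

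For part 1, fix a prime $q$ dividing $|S|$. By the theorem of Malle, Navarro and Olsson, every non-abelian simple $S$ has a vanishing element of prime power order. The cleaner route, however, is to imitate Lemma \ref{lemma2.3}. Choose a prime $r\ne q$ dividing $|S|$ (possible because $|S|$ has at least three prime divisors) such that no element of $S$ of order $qr$ exists, or more generally such that $q\mid |x^{S}|$ for some $r$-element $x$. By part 2 there is an $r$-element $x\in S_i$ vanishing in $G$. Because $C_G(x)\cap M\supseteq \prod_{j\ne i}S_j\times C_{S_i}(x)$, we get $|x^G|\ge |x^M|=|x^{S_i}|$ in the divisibility sense; indeed $|x^M|$ divides $|x^G|$, since $|G:C_G(x)|=|G:MC_G(x)|\,|M:C_M(x)|$. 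So it suffices to show $q\mid |x^{S_i}|$.

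\textbf{The main obstacle} is choosing the $r$-element $x$ with $q\mid|x^S|$ so that the vanishing argument of part 2 still applies to that specific $x$. When $S$ has a $r$-defect-zero character, every $r$-element of $S_i$ is vanishing in $G$, so we only need some $r$-element $x$ with $q\mid |x^S|$. That requires $r\neq q$ and a Sylow $q$-subgroup not centralizing $x$. Here we would take $x$ of order $r$, where $r$ is a prime with $C_S(x)$ a $q'$-group, for instance a primitive prime divisor element in a torus as in Lemma \ref{lemma2.6}. Alternatively, we can take $r$ such that the Sylow $r$-subgroup has $Z(P_r)$ not centralized by a full Sylow $q$. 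If no such $r$ existed, every $r$-element would commute with some Sylow $q$-subgroup for all $r\neq q$, so all $q'$-elements of prime power order would have $q'$-class size. By Theorem \ref{Sotomayor_van}-type (and Camina-type) arguments this forces a nontrivial normal structure, contradicting the simplicity of $S$. For the remaining small cases without defect zero characters (alternating and sporadic with $r\in\{2,3\}$), we instead use directly the explicit elements of Lemmas \ref{lem_alt} and \ref{cor2}, together with Lemma \ref{lemma2.3} when the prime graph has at least three components, case by case.
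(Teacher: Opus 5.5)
Your proposal is correct and, once the detours in part 1 are set aside, it is the paper's argument. When $S$ has irreducible characters of $l$-defect zero for every prime $l$, Lemma~\ref{Brough_van} applied to $M$ makes every nontrivial element of $S_i$ vanish in $G$, and Theorem~\ref{Sotomayor_van} applied to $S$ produces a $q'$-element $x$ of prime power order with $q\mid |x^{S_i}|=|x^M|$, which divides $|x^G|$. The remaining alternating and sporadic groups are handled by the $\mathrm{Aut}(S)$-extendible characters of Lemmas~\ref{lem_alt} and~\ref{cor2} together with Lemma~\ref{lem2}, and part 2 is done the same way.

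The other routes you float (a prime $r$ not adjacent to $q$ in the prime graph, primitive prime divisor tori, Lemma~\ref{lemma2.3}) are unnecessary, and the first and last would not work on their own. Such an $r$ need not exist, and Lemma~\ref{lemma2.3} only yields elements vanishing in $S$, not in $G$.
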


\begin{proof}
(1) Suppose $S_i$ has an irreducible character of $l$-defect zero for every prime $l$ dividing $|S_i|$. By Lemma \ref{Brough_van}, the elements of $S_i\setminus\{1\}$ are vanishing in $G$. Note that $|M|$ and $|S_i|$ have same prime divisors. Then $q\mid |x^{S_i}|$ for some $x\in S_i$ (else, by Theorem \ref{Sotomayor_van}, $S_i$ will have a normal $q$-complement).  
Since $M\unlhd G$, we have $|x^M|$ divides $|x^G|$. Therefore $q$ divides $|x^G|$, and $x$ is vanishing in $G$.

Now suppose that $S_i$ has no irreducible character of $l$-defect zero for some
prime $l$. Then $l\in\{2,3\}$, and by (\cite{MR1321575}, Corollary 2), $S_i$ is
isomorphic to one of
\begin{equation}\label{*}
A_n\ (n\ge 7),\quad M_{12},\quad M_{22},\quad M_{24},\quad J_2,\quad HS,\quad
Suz,\quad Ru,\quad Co_1,\quad Co_3,\quad \text{or } BM.
\end{equation}
By Lemma \ref{cor2} and Lemma \ref{lem_alt}, there exist $x\in S_i$ of prime
power order and  $\chi\in \mathrm{Irr}(S_i)$ such that $q$ divides $|x^{S_i}|$,
$\chi(x)=0$, and $\chi$ extends to $\mathrm{Aut}(S_i)$. 
By Lemma \ref{lem2}, $\chi\times\cdots\times \chi\in \mathrm{Irr}(M)$
extends to $G$, and vanishes at $x$. Moreover, $q$ divides $|x^{S_i}|=|x^M|$, which divides $|x^G|$. This proves $(1)$.
\vskip2mm
$(2)$ Suppose $S_i$ has an irreducible character of $p$-defect zero, then so does $M$. Hence, by
Lemma \ref{Brough_van}, every $p$-element of $S_i$ is
vanishing in $G$. 

Suppose $S_i$ has no irreducible character of $p$-defect zero. Then
$p\in\{2,3\}$, and by (\cite{MR1321575}, Corollary 2), the group $S_i$ is
isomorphic to one of the groups in \ref{*}.

If $p=2$, then Lemma \ref{lem_alt} and
Lemma \ref{cor2} (see tables in the proof) implies that there exists a $2$-element $x_i\in S_i$ and
a character $\chi\in \mathrm{Irr}(S_i)$ such that $\chi(x_i)=0$ and $\chi$ extends to $\mathrm{Aut}(S_i)$. By Lemma \ref{lem2}, $\chi\times\cdots\times \chi\in \mathrm{Irr}(M)$ extends to $G$. Therefore, $x_i$ is vanishing in $G$.

Let $p=3$. If $S_i\cong A_n(n\geq 7),\ M_{12},\, J_2,\, Suz,\, Co_1$ or  $Co_3$, by Lemma \ref{lem_alt} and Lemma \ref{cor2} (see tables in the proof), there is a $3$-element in $S_i$ and a character $\chi\in \mathrm{Irr}(S_i)$ such that $\chi(x_i)=0$ and $\chi$ extends to $\mathrm{Aut(S_i)}$. If, $S_i\cong HS,\ Ru,\ BM,\ M_{22},\ \text{or } M_{24}$, then following the notations in \cite{MR827219}, choose $x_i$ from the conjugacy class
$3A$ of these groups and the following characters
$$
\chi_9,\quad \chi_5,\quad \chi_{78},\quad \chi_6,\quad \chi_{20},
$$
respectively. Each of these characters extends to $\mathrm{Aut}(S_i)$ and vanishes on
the corresponding element $x_i$. By Lemma \ref{lem2}, $\chi\times\cdots\times \chi\in \mathrm{Irr}(M)$ extends to $G$, and $x_i$ remains vanishing in $G$.
\end{proof}

\begin{lemma}\label{lemab}
If $G$ satisfies the property $P_v(p,q)$ with primes $p\neq q$, then any minimal normal subgroup $M$ of $G$, whose order is divisible by $q$, is abelian. 
\end{lemma}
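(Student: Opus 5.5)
Suppose, for a contradiction, that $M$ is a non-abelian minimal normal subgroup of $G$ with $q\mid |M|$. Write $M=S_1\times\cdots\times S_k$ with each $S_i\cong S$ non-abelian simple; then $q\mid |S|$. The plan is to produce a $p'$-element $x\in S_1$ of prime power order that is vanishing in $G$ and satisfies $q\mid |x^{S_1}|$. Since $|x^{S_1}|=|x^M|$ divides $|x^G|$, such an $x$ contradicts $P_v(p,q)$. Lemma \ref{cor:vanishing_minimal_normal_combined}(1) already gives an element with all of these properties except that it need not be a $p'$-element. So I will rerun its two-case argument, this time also keeping track of $p$.

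\textbf{Case A: $S$ has an irreducible character of $l$-defect zero for every prime $l$ dividing $|S|$.} Then $M$ has such characters as well, since tensor products of defect-zero characters again have defect zero. By Lemma \ref{Brough_van}, every non-identity element of $S_1$ is vanishing in $G$, so it only remains to find a suitable $p'$-element with class size divisible by $q$.
\begin{itemize}
\item If $p\mid |S|$, Theorem \ref{thma} says $S_1$ fails $P_v(p,q)$. This gives a $p'$-element $x\in S_1$ of prime power order with $q\mid |x^{S_1}|$.
\item If $p\nmid |S|$, every element of $S_1$ is a $p'$-element. Suppose no element of prime power order in $S_1$ had class size divisible by $q$. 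Then $S_1$ would satisfy $P_v(q,q)$, because all its non-identity elements are vanishing. Theorem \ref{Sotomayor_van} would then make $S_1$ $q$-nilpotent, which is impossible for a non-abelian simple group of order divisible by $q$. So a suitable $x$ exists.
\end{itemize}

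\textbf{Case B: $S$ lacks a defect-zero character for some prime.} By (\cite{MR1321575}, Corollary 2), $S$ lies in the list (\ref{*}). Choose a prime $p_0$ as follows: $p_0=p$ if $p\mid |S|$, and otherwise any prime divisor of $|S|$ other than $q$. Lemmas \ref{lem_alt} and \ref{cor2}, applied with the pair $(p_0,q)$, give a $p_0'$-element $x\in S_1$ of prime power order and $\chi\in\mathrm{Irr}(S_1)$ such that $q\mid |x^{S_1}|$, $\chi(x)=0$, and $\chi$ extends to $\mathrm{Aut}(S_1)$. Here $x$ is a $p'$-element: for $p_0=p$ this is immediate, and if $p\nmid |S|$ it is automatic. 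By Lemma \ref{lem2}, $\theta=\chi\times\cdots\times\chi\in\mathrm{Irr}(M)$ extends to some $\hat\theta\in\mathrm{Irr}(G)$. Then
\[
\hat\theta(x)=\theta(x)=\chi(x)\chi(1)^{k-1}=0,
\]
so $x$ is vanishing in $G$.

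In both cases we obtain a vanishing $p'$-element of prime power order whose $G$-class size is divisible by $q$, contradicting $P_v(p,q)$. Hence $M$ is abelian.

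The main obstacle is the case $p\nmid |S|$, where neither Theorem \ref{thma} nor the tables apply directly with $p$ itself. This is handled by the $q$-nilpotency argument in Case A and by substituting the auxiliary prime $p_0$ in Case B.
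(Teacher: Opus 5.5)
Your proof is correct and follows essentially the same route as the paper. You split according to whether $S$ has defect-zero characters for all primes. In the defect-zero case you use Theorem~\ref{thma} when $p\mid |S|$, and the $q$-nilpotency argument via Theorem~\ref{Sotomayor_van} when $p\nmid|S|$; this is exactly what the paper invokes through Lemma~\ref{cor:vanishing_minimal_normal_combined}(1). In the remaining case you use Lemmas~\ref{lem_alt}, \ref{cor2} together with Lemma~\ref{lem2}, as the paper does.

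Your auxiliary prime $p_0$ in Case B is a small but welcome piece of extra care. The paper applies those two lemmas with $p$ itself, even though they are stated only for $p$ dividing $|S|$.
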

\begin{proof}
If possible, let $M = S_1 \times \cdots \times S_k$ be a \textit{non-abelian} minimal normal subgroup of $G$, where each $S_i$ is isomorphic to a simple group $S$, and $q \mid |M|$.

\noindent
\textbf{Case 1:} Suppose that $S$ has no irreducible character of $l$-defect zero for some prime $l$.

\noindent
By Lemma \ref{cor2} and Lemma \ref{lem_alt}, there is a $p'$-element $g \in S$ of prime power order and $\chi \in \mathrm{Irr}(S)$ such that $q \mid |g^S|$ (which divides $|g^G|$) and  $\chi(g) = 0$. By Lemma \ref{lem2}, $\chi\times\cdots\times \chi\in \mathrm{Irr}(M)$ extends to $G$, hence $g$ is vanishing in $G$. Thus, $G$ does not satisfy the property $P_v(p,q)$, a contradiction.

\noindent
\textbf{Case 2:} Suppose that $S$ has an irreducible character of $l$-defect zero for all primes $l$.

Then this property is also satisfied by $M$. By (\cite{MR1280461}, Theorem 8.17),  all the elements in $M \setminus \{1\}$ are vanishing in $M$. By Lemma \ref{Brough_van}, all elements in $M \setminus \{1\}$ are vanishing in $G$.

If $p$ divides $|M|$, then for any $p'$-element $x\in S_i$ of prime power order, since $x$ is vanishing in $G$, it follows that $q \nmid |x^S_i|$. Hence $S_i$ satisfies the property $P_v(p,q)$, a contradiction by Theorem \ref{prop P(p,q)}.

If $p$ does not divide $|M|$, then all the elements of $M$ of prime power order are $p'$-elements, they are vanishing in $G$, and their conjugacy class sizes in $G$ (and hence in $M$) are coprime to $q$. This contradicts Lemma \ref{cor:vanishing_minimal_normal_combined} (1).

The contradictions in all the cases show that $S_i$ and hence $M$ must be abelian ($q$-group).\end{proof}

The following Lemma will be crucial in the proof of our main theorem.

\begin{lemma}\label{lemn}
Let $G$ be a finite group, $N\trianglelefteq G$ and $M=S_1\times \cdots \times S_k$ ($k\ge 2$) a minimal non-abelian normal subgroup of $G$ contained in $N$, where $S_i\cong S$ is a simple group, not isomorphic to $\mathrm{PSL}_2(l)$. Suppose that $N/M$ is solvable, and non-trivially permutes $\{S_1,\ldots, S_k\}$. Then for every odd prime $p$ dividing $|G|$, there exists a $p'$-element of prime power order $x\in M$, which is vanishing in $G$, such that $|x^G|$ is divisible by every prime divisor of $|N/M|$.    
\end{lemma}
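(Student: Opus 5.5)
We will build $x$ as a ``diagonal-type'' element spread over the factors $S_1,\dots,S_k$, so that its $G$-centralizer is forced to fix many of the $S_i$. The $G$-class size then picks up contributions both from the permutation action on the factors and from the simple factors themselves.

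\emph{Choice of the building block.} Since $p$ is odd, $2$ is coprime to $p$. We therefore work with a $2$-element $y\in S$ that is vanishing for an $\mathrm{Aut}(S)$-invariant character $\chi\in\mathrm{Irr}(S)$, or for a character of $2$-defect zero. Lemma \ref{cor:vanishing_minimal_normal_combined}(2), the tables of Lemmas \ref{lem_alt} and \ref{cor2}, and Lemma \ref{Brough_van} supply such a $y$. In the defect-zero case, Lemma \ref{Brough_van} applied to $M$ shows that \emph{every} element of $M$ of even order is vanishing in $G$, including non-diagonal ones. In the other case we use $\chi\times\cdots\times\chi$, which extends to $G$ by Lemma \ref{lem2}. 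We must then arrange that $\chi$ vanishes on at least one coordinate of $x$, so that the product vanishes.

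\emph{Construction of $x$.} Take $x=(y_1,\dots,y_k)$ with each $y_i$ a $2$-element of $S_i$ and $y_1=y$. So $x$ is a $2$-element, hence a $p'$-element of prime power order, and it is vanishing in $G$ by the previous paragraph. We choose the coordinates so that they lie in pairwise distinct $\mathrm{Aut}(S)$-orbits wherever possible; for instance, we take some $y_i=1$ and others nontrivial, in a pattern with trivial stabilizer. Any $g\in C_G(x)$ permutes the $S_i$ and must preserve the pattern $(y_1,\dots,y_k)$ up to automorphisms. Hence the image of $C_G(x)$ in the permutation group induced on $\{S_1,\dots,S_k\}$ stabilizes a partition with prescribed block sizes. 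We will use the fact that $N/M$ acts nontrivially and, being solvable, acts through a solvable permutation group. A standard result on solvable permutation groups (Gluck / Seress: there is a partition into at most $5$ parts, or a subset, with trivial set-wise stabilizer, and only $2$ parts needed away from small exceptions) then gives a labelling whose stabilizer in the image of $N$ is trivial. Consequently $C_N(x)\le \mathrm{N}:=\bigcap_i N_N(S_i)$, and the permutation part contributes the full index $|N:\bigcap_i N_N(S_i)|$ to $|x^N|$, which divides $|x^G|$.

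\emph{Primes of $|N/M|$.} The primes of $|N/M|$ divide either the permutation image $N/\bigcap_i N_N(S_i)$, which is handled above, or $|\bigcap_i N_N(S_i)/M|$. The latter embeds into $\prod_i \mathrm{Out}(S_i)$, so its primes divide $|\mathrm{Out}(S)|$. For these primes we need each nontrivial coordinate $y_i$ to have $\mathrm{Aut}(S)$-class size in $S$ (equivalently, $|C_{\mathrm{Aut}(S)}(y_i)|$ relative to $|C_S(y_i)|$) divisible by the relevant primes of $|\mathrm{Out}(S)|$. The main obstacle is making both requirements hold at once. First, the labelling patterns must be achievable with only a couple of genuinely distinct classes of $2$-elements, since $2$-elements with trivial stabilizer pattern need as many as $5$ classes. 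Second, the centralizer in $\mathrm{Out}$ must be controlled. I would first try using just two labels, $1$ and $y$, invoking the subset version of the solvable-permutation-group result, where the exceptions involve small degrees and $S_4$-type factors. For primes dividing $|\mathrm{Out}(S)|$ (field and diagonal automorphisms in Lie type), I would choose $y$ so that it is not centralized by field/graph automorphisms of that order, for example a regular unipotent or a suitable semisimple $2$-element. Here the exclusion of $S\cong\mathrm{PSL}_2(l)$ should be what rules out the case where no such choice exists. If that fails, the fallback is to use $x$ with all coordinates equal to $y$ and argue only with the block structure from $N/M$ acting nontrivially.
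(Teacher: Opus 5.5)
There is a genuine gap in the permutation part of your argument. You never produce an $x$ such that, for every prime $r$ of the permutation image of $N$ on $\{S_1,\dots,S_k\}$, the image of $C_N(x)$ has index divisible by $r$. The routes you sketch do not get there:
\begin{itemize}
\item \emph{Five-part partitions.} The five-part partition theorem you quote needs up to five pairwise distinct $\mathrm{Aut}(S)$-orbits of labels (one may be $1$). Restricted to $2$-elements you cannot supply them: $J_1$ and ${}^2G_2(3^{2n+1})$ have one class of involutions and no elements of order $4$, so only the labels $1$ and $y$ exist.
\item \emph{Two labels.} With two labels you only have a single subset, the support of $x$. Your assertion that a subset with trivial stabilizer exists away from small exceptions is false for non-primitive solvable groups. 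Take $H=S_3^m$ acting on $3m$ points with $m$ orbits of length $3$; this occurs as the image of $N=S^{3m}\rtimes S_3^m$ in $G=S\wr(S_3\wr C_m)$. Every subset stabilizer contains a Sylow $2$-subgroup of $H$, so the stabilizer argument yields nothing for the prime $2$. In general a single subset only controls primes $\ge 5$; this is the $\{2,3\}$-stabilizer lemma the paper uses for $\mathrm{PSL}_2(l)$ in Theorem \ref{thmb}.
\item \emph{Constant pattern.} The fallback $x=(y,\dots,y)$ gives no information, since every permutation of the factors preserves a constant pattern.
\end{itemize}

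The missing idea is that a trivial stabilizer is not needed, only one containing no full Sylow subgroup. The paper gets this from Dolfi's theorem (\cite{MR1785438}, Corollary 4): a solvable permutation group $H$ on $\Omega$ has disjoint subsets $\Gamma,\Delta$ such that every prime divisor of $|H|$ divides $|H:\mathrm{Stab}_H(\Gamma)\cap\mathrm{Stab}_H(\Delta)|$. This needs just three labels: $1$ off $\Gamma\cup\Delta$, $a$ on $\Gamma$, and $b$ on $\Delta$. Here $a,b$ are nontrivial $p'$-elements whose orders are powers of one prime and which lie in different $\mathrm{Aut}(S)$-orbits. Vanishing of $x$ is then obtained essentially as you propose, via a defect-zero character of $M$ or an extendible $\chi\times\cdots\times\chi$.

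The real work is producing $a,b$. If $S$ has elements of order $4$, take elements of orders $2$ and $4$. Otherwise $S$ is $J_1$ or a Ree group; this, not control of outer automorphisms, is where $S\not\cong\mathrm{PSL}_2(l)$ enters.
\begin{itemize}
\item In $J_1$, take two classes of elements of order $5$, or of order $19$ if $p=5$.
\item In ${}^2G_2(3^{2n+1})$ with $p\neq 3$, take elements of orders $3$ and $9$.
\item For $p=3$, take a torus element of primitive-prime-divisor order and a power of it outside its $\mathrm{Aut}(S)$-orbit, found by counting eigenvalues.
\end{itemize}

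Your worry about primes dividing only $|\bigcap_i N_N(S_i)/M|$ is legitimate. The paper's proof does not treat them either: it applies Dolfi's result to $N/M$ as though the action were faithful. This is harmless for Theorem \ref{thmb}, where $N=O^{q'}(G)$ and only $q$ is needed; $q$ divides the order of the permutation image, which is nontrivial and generated by $q$-elements.

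Your remedy for these primes cannot work, however. The standard regular unipotent element is fixed by field automorphisms. More decisively, let $S=\mathrm{PSL}_3(32)$ and let $\sigma$ be its field automorphism of order $5$, coprime to $|S|$. Every $2$-element of $S$ is conjugate into $C_S(\sigma)=\mathrm{PSL}_3(2)$. So in $G=(S\times S)\rtimes(\langle\sigma\rangle\times C_2)$, with $\sigma$ acting diagonally and $C_2$ swapping the factors, no $2$-element $x\in M$ has $5\mid|x^G|$.
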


\begin{proof} First, we prove the following 

\noindent\textbf{Claim:} There exist $a,b\in S$, whose orders are powers of the same prime, that belong to different $\mathrm{Aut}(S)$-orbit and are vanishing in $G$.

\vskip5mm\noindent
\textbf{Case 1.} $S$ has an element of order $4$.

 By Lemma \ref{cor:vanishing_minimal_normal_combined},  for the prime divisor $2$ of $|S|$, there is a $2$-element $a$ in $S$ which is vanishing in $G$. Note that $a$ is a $p'$-element (since $p>2$). Choose $b\in S$ a $2$-element whose order is different than $o(a)$; this is possible since $S$ contains elements of order $4$.

\vskip5mm\noindent
\textbf{Case 2.} $S$ has no element of order $4$:

Then by (\cite{MR569209}, Page. 485 ), $S$ is one of the following: $S \cong \mathrm{PSL}_2(l) $ for some $l$, or  $S\cong J_1$, or $S\cong { }^2G_2(3^{2n+1})$, $n\ge 1$ (Ree  group). By assumption, $S\not\cong \mathrm{PSL}_2(l)$. 
If $S\cong J_1$, then by \cite{MR827219} for $p\neq 5$, take $a,b\in S$, both of order $5$ but from different conjugacy classes, and if $p=5$, take $a,b\in S$ of order $19$ from different conjugacy classes in $S$. In each of these cases, $a,b$ are $p'$-elements in different $\mathrm{Aut}(S)$-orbits (since $\mathrm{Aut}(J_1)=\mathrm{Inn}(J_1)$); their orders are coprime to $6$, they are vanishing in $G$. 

Let $S\cong {^2G_2}(3^{2n+1})$, $n\geq 1$. Note that $\mathrm{Out}(S)\cong C_{2n+1}$ (generated by field automorphisms of order $2n+1$). We now consider the following subcases:

\textbf{Case 2(A)}: If $p\neq 3$, then we choose $a$ of order $3$ and $b$ of order $9$. Since ${}^2G_2(3^{2n+1})$ has irreducible character of defect $0$ for every prime, $a,b$ are vanishing in $G$.

\vskip5mm
\textbf{Case 2(B): $p=3$.}

By (\cite{MR2507573}, Theorem 3.1), $S$ has two cyclic maximal tori
$T_{1}$ and $T_{2}$ of respective orders
$$
|T_{1}|=q-\delta+1
\qquad\text{and}\qquad
|T_{2}|=q+\delta+1.
$$ where $\delta=\sqrt{3q}=3^{n+1}$ and $q=3^{2n+1}$.
Notice that
$$
(q-\delta+1)(q+\delta+1)
=
q^2-q+1
=
\Phi_6(q).
$$
 For each
$\varepsilon\in\{1,2\}$, by Theorem \ref{zsigmondy}, there is a prime
$r_{\varepsilon}\mid q+\varepsilon\delta+1$
which is a primitive prime divisor of
$3^{6(2n+1)}-1$ and consequently
$6(2n+1)\mid r_{\varepsilon}-1.$ Since $\gcd(q-\delta+1,q+\delta+1)=1$, $r_{1}\neq r_{2}$. Now, $r_{\varepsilon}-1 \ge 6(2n+1)$, we get $r_{\varepsilon}-1>6(2n+1)$ for some $\varepsilon\in\{1,2\}$; fix such a sign $\varepsilon$, and write
$$
T=T_{\varepsilon}
\qquad\text{and}\qquad
r=r_{\varepsilon}.
$$
Consider $x\in T$ of order $r$. By $\S 3$ in \cite{MR3232797} , we can assume that $S$ is a subgroup of $\mathrm{SL}(V)$, where $\dim_{\mathbb{F}_q}(V)=7$ and $S$ preserves a non-degenerate symmetric bilinear form on $V$. Also, by (\cite{MR3232797}, Lemma 3.4), $1$ is an eigenvalue of every $g\in S$. Since $|\mathrm{Out}(S)|=2n+1$, the $\mathrm{Aut}(S)$-orbit of $x$ has at most $6(2n+1)$ non-trivial eigenvalues in $\overline{\mathbb{F}}_q^{\times}$, and they have order $r$. Since $r-1=\varphi(o(x))>6(2n+1)$, there is $1<i<r$ such that  an eigenvalue of $x^i$ is not the eigenvalue of any member in $\mathrm{Aut}(S)$-orbit of $x$. It follows that $x$ and $x^i$ are $p'$-elements that belong to distinct $\mathrm{Aut}(S)$-orbit and have order $r$. Let $a=x$ and $b=x^i$. Since $o(a),o(b)$ are coprime to $6$, by (\cite{MR3500775}, Lemma 2.2) $a$ and $b$ are vanishing in $G$. This completes the proof of the claim.

\vskip5mm
Consider the action of $N/M$ on $\Omega:=\{1,\ldots, k\}$ by identifying $i$ with $S_i$. By (\cite{MR1785438}, Corollary 4), there exist  disjoint subsets $\Gamma$ and $\Delta$ of $\Omega$ such that every prime divisor of $|N/M|$ also divides 
$$
|N/M : \mathrm{Stab}_{N/M}(\Gamma)\cap \mathrm{Stab}_{N/M}(\Delta)|.
$$
Now fix an isomorphism $\phi_i: S\rightarrow S_i$. With $a$ and $b$ constructed in the above claim, let $a_i=\phi_i(a)$, $b_i=\phi_i(b)$. Consider $x=(x_1,\ldots, x_k)\in S_1\times \cdots \times S_k$ where  $x_i=1$ if $i\notin \Gamma\cup\Delta$ and 
$$
x_i=
\begin{cases} 
a_i & \mbox{ if } i\in \Gamma \\
b_i & \mbox { if } i\in \Delta.\end{cases}
$$ 
Recall that by (\cite{MR1357169}, Theorem 3.3.20) for a non-abelian simple group $S$, any automorphism of $S\times \cdots \times S$ is composition of 

- an automorphism which stabilizes each component $S$ (set-wise); 

- an automorphism which permutes the components.

The construction of $x$, using $a$ and $b$, shows that if $g\in N$ centralizes $x$, then $g$ must stabilize $\Gamma$ and $\Delta$, i.e. 
$$
C_{N}(x)M/M \subseteq \mathrm{Stab}_{N/M}(\Gamma)\cap \mathrm{Stab}_{N/M}(\Delta).
$$ 
Hence $[N/M:C_{N}(x)M/M]$ is divisible by $[N/M:\mathrm{Stab}_{N/M}(\Gamma)\cap \mathrm{Stab}_{N/M}(\Delta)]$, which is divisible by every prime divisor of $|N/M|$. Hence, $|x^G|$ is divisible by every prime divisor of $|N/M|$. \end{proof}

\vskip5mm
Some finite simple groups possess $S$ a coprime automorphism (i.e. automorphism of coprime order). Dolfi et al. in \cite{MR2469367} proved that the fixed point subgroup of such an automorphism has order not divisible by at least one prime divisor of $|S|$. We obtain a generalization of this result, which will be used in our second main theorem.

\begin{lemma}\label{lem2.9}
Let $S$ be a non-abelian simple group that has an automorphism $\alpha$ of coprime order. Then there exist two distinct odd prime divisors $t_i$ ($i=1,2$) of $|S|$ such that $(t_i,|C_S(\alpha)|)=1$.
\end{lemma}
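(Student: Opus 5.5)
Since $\alpha$ has order coprime to $|S|$, the classification of finite simple groups restricts the possibilities sharply. Alternating and sporadic groups have $|\mathrm{Out}(S)|$ dividing $4$, and graph and diagonal automorphisms have orders dividing $|S|$. Hence, up to conjugacy in $\mathrm{Aut}(S)$, I will take $S$ to be a group of Lie type over $\mathbb{F}_{\ell^{f}}$ and $\alpha$ a field automorphism whose order $m>1$ is coprime to $|S|$; in particular $m$ is odd and coprime to $\ell$. This is exactly the reduction used by Dolfi et al.\ in \cite{MR2469367}, and I would quote it from there. I expect that (possibly after replacing $\alpha$ by a suitable generator of $\langle\alpha\rangle$, which leaves $C_S(\alpha)$ unchanged) $C_S(\alpha)$ is then the group of the same Lie type over the subfield $\mathbb{F}_{\ell^{f/m}}$. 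So $|C_S(\alpha)|$ is, up to a bounded factor, $\ell^{N f/m}\prod_i \Phi_{d_i}(\ell^{f/m})$, while $|S|$ is the analogous product over $\ell^{f}$.

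The main step is to exhibit two odd primes dividing $|S|$ but not $|C_S(\alpha)|$, and I will use Zsigmondy primes as in Lemma \ref{lemma2.6}. Write $e$ for the largest index such that $\ell^{e f}-1$ (or the appropriate cyclotomic factor, for twisted groups) divides $|S|$. For example, $e=n$ for $\mathrm{PSL}_n$, $e=2n$ for $\mathrm{PSU}_n$ with $n$ odd and for $\mathrm{PSp}_{2n}$, and $e=18$ for $E_7$. Take
\begin{itemize}
\item $t_1$ a primitive prime divisor of $\ell^{ef}-1$, and
\item $t_2$ a primitive prime divisor of $\ell^{e'f}-1$, where $e'$ is the second largest suitable index (for example $e'=n-1$ for $\mathrm{PSL}_n$).
\end{itemize}
Every prime divisor of $|C_S(\alpha)|$ other than $\ell$ divides $\ell^{j f/m}-1$ with $j\le e$, so it divides $\ell^{jf/m}-1$ for some $jf/m\le ef/m<ef$. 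Primitivity of $t_1$ therefore forces $t_1\nmid |C_S(\alpha)|$. The same argument applies to $t_2$ provided $e f/m<e'f$, which holds because $m\ge 3$ (as $m$ is odd) and $e'\ge e/3$ in all cases. Both primes are odd, since a primitive prime divisor of $\ell^{k}-1$ is $\equiv 1 \pmod k$ with $k\ge 2$. They are also distinct because their multiplicative orders differ, and neither equals $\ell$.

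The main obstacle is handling the small cases where Zsigmondy fails (Theorem \ref{zsigmondy}) and the groups of small rank. This is where I would spend the care. The exceptional cases are $ef=6$ with $\ell=2$, and $ef=2$ with $\ell$ a Mersenne prime. Both are largely excluded because $m\ge3$ divides $f$ and $\gcd(m,|S|)=1$. In particular $3\nmid m$ whenever $3$ divides $|S|$, which is always true except for the Suzuki groups, where $m\ge5$. For rank-one groups such as $\mathrm{PSL}_2(\ell^f)$ there is only one cyclotomic index $e=2$, so I would take $t_1\mid \ell^{2f}-1$ primitive and $t_2$ primitive for $\ell^{f}-1$. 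Since $f/m<f$, this works when $\ell^{f}-1$ has a primitive divisor, which holds as $f\ge m\ge 5$ in the $\ell=2$ case. For the Suzuki and Ree groups I would instead use the factors $q\pm\sqrt{2q}+1$ (respectively $q\pm\sqrt{3q}+1$), as in Case 2(B) of Lemma \ref{lemn}. Each factor carries its own primitive divisor of $\ell^{4f}-1$ or $\ell^{6f}-1$, and these are coprime to the subfield group order. Finally I would check the listed small exceptions directly, or via \cite{GAP4}.
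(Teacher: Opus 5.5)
Your strategy is the paper's: reduce to a field automorphism whose centralizer is the subfield group, then take Zsigmondy primes for two large cyclotomic exponents of $|S|$. The paper works in base $q$ with exponents $ip$ for $\alpha$ of prime order $p$; your base-$\ell$ version with general odd $m$ is equivalent.

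\textbf{The gap is the Suzuki case.} There $(e,e')=(4,1)$, so your size test $ef/m<e'f$ needs $m>4$. But $m=3$ does occur, because $3\nmid|{}^2B_2(2^f)|$: for example $S={}^2B_2(8)$ with $\mathrm{Out}(S)\cong C_3$. So your parenthetical ``Suzuki groups, where $m\ge5$'' is wrong.

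Your detour through $q\pm\sqrt{2q}+1$ then breaks down. The claim that each factor carries a primitive prime divisor of $2^{4f}-1$ is not a consequence of Theorem~\ref{zsigmondy}. It is false for $q=8$: there $q-\sqrt{2q}+1=5$, which divides $2^4-1$. Moreover $5$ divides $|C_S(\alpha)|=|{}^2B_2(2)|=20$, so your second prime does not exist in this case. Since ${}^2B_2(8)$ is not a Zsigmondy exception, your closing step of checking ``the listed small exceptions'' would not catch it.

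\textbf{The fix.} Replace the size comparison by the sharper fact that a primitive prime divisor of $\ell^k-1$ divides $\ell^j-1$ if and only if $k\mid j$ (the multiplicative order of $\ell$ modulo it is exactly $k$).

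For ${}^2B_2(2^f)$, take $t_1$ primitive for $2^{4f}-1$ and $t_2$ primitive for $2^f-1$, as the paper does. Both exist because $f$ is odd and $f\ge3$, and both divide $|S|$ (via $q^2+1$ and $q-1$). Every odd prime divisor of $|{}^2B_2(2^{f/m})|$ divides $2^{f/m}-1$ or $2^{4f/m}-1$. Hence $t_2$ could divide it only if $m\mid 1$ or $m\mid 4$, and $t_1$ only if $4m\mid j$ for some $j\in\{1,4\}$. Both are impossible for odd $m>1$. For ${}^2B_2(8)$ this gives $t_1=13$ and $t_2=7$.

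The same criterion also repairs your inequality ``$e'\ge e/3$, $m\ge3$'', which is only non-strict when $e=3e'$. This happens for $\mathrm{PSU}_3$ and ${}^2G_2$, both with $(e,e')=(6,2)$. There $t_2$ would divide the subfield order only if $2m\mid 6$, i.e.\ $m=3$, which is excluded because $3\mid|S|$. It also shows your Aurifeuillian argument for ${}^2G_2$ is unnecessary, and it too is not justified by Zsigmondy's theorem. Your general scheme with $(e,e')=(6,2)$ and $m\ge5$ already handles the Ree groups.
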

\begin{proof}
It is well-known (\cite{MR3753581}, Corollary 2.6.2) that the only non-abelian simple groups that possess a coprime automorphism are (some) \textit{simple groups of Lie type}, say $S=G(r^a)$, for a prime $r$ and $a\geq 1$; moreover, the coprime automorphisms are (conjugate to) field automorphisms of $S$, and have odd order (since $|S|$ is even). 

Let $\alpha\in\mathrm{Aut}(S)$ be of order $p$ (prime), which is induced by a member in $\mathrm{Gal}(\mathbb{F}_{r^a}/\mathbb{F}_r)$. We conclude that $S=G(r^{pf})$ and $C_S(\alpha)=G(r^f)$ where $a=pf$. For simplicity, let $r^f=q$. Note that $r\neq p$ (since $r||S|$). Thus, 
$$
S=G(q^p), \hskip5mm C_S(\alpha)=G(q),
$$
where $\alpha$ is coprime automorphism of $S$ of order $p$. If $S$ is not a Suzuki group, it is known that $|S|$ is divisible by $2\cdot 3$, so we must have $p>3$.

In the table given below, we list integers $m,n$ of the form $q^{ip}-1$ ($i\ge 1$) which divide $|S|$. Then we choose $t_1,t_2$ as primitive prime divisors of $m,n$ respectively; by Theorem \ref{zsigmondy}, they exist since $ip\notin \{2,6\}$ for all the cases of $S$, except for $S={}^2B_2(q^p)$, a Suzuki group. If $S={}^2B_2(q^p)$, then primitive prime divisors of $q^{4p}-1$ and of $q^p-1$ exists (since $p$ is odd and $4p\notin \{2,6\}$).

\vskip2mm
\begin{center}
\begin{tabular}{|c|c|c|}
\hline
\textbf{Simple group $S$} ($p$ odd) & $m$ & $n$ \\
\hline
$\mathrm{F}_4(q^p)$, ${}^2\mathrm{E}_6(q^{2p})$, $\mathrm{E}_6(q^p)$, $E_7(q^p)$ & $q^{12p}-1$ & $q^{8p}-1$ \\\hline

$\mathrm{E}_8(q^p)$ & $q^{30p}-1$ & $q^{24p}-1$ \\
\hline
${}^2\mathrm{F}_4(q^p)$ ($q=2^{2n+1},n\geq 1$) & $q^{6p}+1$ & $q^{3p}+1$ \\
\hline

$\mathrm{G}_2(q^p)$ ($q\geq 3$), $^3\mathrm{D}_4(q^{3p})$ & $q^{3p}-1$ & $q^{2p}-1$ \\
\hline
$\mathrm{PSL}_n(q^p)$ ($n\geq 2$) & $q^{np}-1$ & $q^{(n-1)p}-1$ \\
\hline
$\mathrm{PSp}_{2n}(q^p)$ ($n\geq2$), &  &  \\

$\mathrm{P}\Omega_{2n+1}(q^p)$ $(n\geq 3$, $q$-odd), &  $q^{2n p}-1$ & $q^{(2n-2)p}-1$  \\
$\mathrm{P}\Omega_{2n}^{-}(q^p)$ ($n\geq 4$) &  &  \\
\hline
$\mathrm{P}\Omega_{2 n}^{+}(q^p)$ ($n\geq 4$) & $q^{n p}-1$ & $q^{(2n-2)p}-1$ \\

$\mathrm{PSU}_n(q^{2p})$ ($n\geq 3$) &  & \\
\hline

${}^2\mathrm{B}_2(q^p)$ ($q=2^{2n+1},n\geq 1$) & $q^{4p}-1$ & $q^{p}-1$ \\
\hline
${}^2\mathrm{G}_2(q^p)$ ($q=3^{2n+1},n\geq 1$) & $q^{6p}-1$ & $q^{p}-1$ \\
\hline
\end{tabular}
\end{center}
\end{proof}

We now prove the second main theorem of our paper, which generalises a result of Dolfi and Lucido in \cite{MR1826493}. 

\vskip3mm
\noindent\textbf{Theorem \ref{thmb}} If $G$ is a finite group, satisfying the property $P_v(p,q)$ with primes $p\neq q$, and $p>2$, then $O^{q'}(G)$ is solvable.
 
\begin{proof} We assume that $q$ divides $|G|$ (otherwise, $O^{q'}(G)=1$).  By Proposition \ref{prop P(p,q)}, $G$ is not a simple group. We proceed by induction on $|G|$. Hence, we assume that the theorem is true for all the groups of order $<|G|$. 

\noindent\textbf{Claim 1.} For any minimal normal subgroup $M$ of $G$, $O^{q'}(G/M)$ is solvable. 

Let $\overline{G}:=G/M$. If $q\nmid |\overline{G}|$, then $O^{q'}(\overline{G})=1$, proving the claim. Hence, we assume that $q$ divides $|\overline{G}|$.

If $\overline{G}$ has no vanishing $p'$-elements of prime power order, then in particular, all the $q$-elements in $\overline{G}$ are non-vanishing. By Theorem \ref{Thm_normal}, $\overline{G}$ has normal Sylow $q$-subgroup, it must be then $O^{q'}(\overline{G})$, hence it is solvable. 

If $\overline{G}$ has vanishing $p'$-elements of prime power order, then by induction $O^{q'}(\overline{G})$ is solvable.

\noindent\textbf{Claim 2.} Any minimal normal subgroup of $G$ contained in $O^{q'}(G)$ is abelian.

If possible, let $M=S_1\times \cdots \times S_k$ $(k\ge 1$) be a minimal normal subgroup of $G$, such that $M\subseteq O^{q'}(G)$ and $S_i\cong S$ is non-abelian simple group. First we show that $O^{q'}(G)/M$ acts trivially on $\{S_1,\ldots, S_k\}$ (i.e each $S_i\trianglelefteq O^{q'}(G)$). This is clearly true if $k=1$; we assume that $k\ge 2$.  

\textbf{Case A.} $S_i\not\cong \mathrm{PSL}_2(l)$ for $l\ge 4$. 

Since $M$ is non-abelian, by Lemma \ref{lemab}, $q\nmid |M|$. Note that $O^{q'}(G)/M\subseteq O^{q'}(G/M)$, which is solvable (by Claim 1). If the action of $O^{q'}(G)/M$ on $\{S_1,\ldots, S_k\}$ is non-trivial, then by Lemma \ref{lemn}, there exists a $p'$-element $x\in M$ of prime power order which is vanishing in $G$, such that $|x^G|$ is divisible by every prime divisor of $|O^{q'}(G)/M|$, hence by $q$, a contradiction to the hypothesis on $G$.

\vskip2mm
\textbf{Case B.} $S_i\cong \mathrm{PSL}_2(l)$ for some $l\ge 4$. 

If the action of $O^{q'}(G)/M$ on $\Omega:=\{S_1,\ldots, S_k\}$ is non-trivial, then by (\cite{MR898721}, Lemma 7), there exists $\Delta\subseteq \Omega$ such that $|\mbox{Stab}_{O^{q'}(G)/M}(\Delta)|$ is a $\{2,3\}$-group. By Lemma \ref{cor:vanishing_minimal_normal_combined} (2), there exists a $p'$-element $a_i\in S_i$ of prime power order which is vanishing in $G$. Consider $x=(x_1,\ldots, x_k)\in S_1\times \cdots \times S_k$ where 
$$
x_i=
\begin{cases} 
1 & \mbox{ if } S_i\notin \Delta \\
a_i & \mbox { if } S_i\in \Delta.\end{cases}
$$  
Clearly, $x$ is a vanishing $p'$-element of prime power order in $G$. If $g\in C_N(x)$, then by construction of $x$, $g$ must stabilize $\Delta$, i.e. $gM\in  \mathrm{Stab}_{O^{q'}(G)/M}(\Delta)$. Now, $|S_i|=|\mathrm{PSL}_2(l)|$  ($l\ge 4$) is always divisible by $2\cdot 3$. Since $q \nmid |S_i|$ and $\mathrm{Stab}_{O^{q'}(G)/M}(\Delta)$ is \{2,3\} group, it follows that $q$ does not divide the order of $g$ in $G$. 

Thus $q\nmid |C_{O^{q'}(G)}(x)|$, we must have $q\mid |x^{O^{q'}(G)}|$, which divides $|x^G|$, a contradiction to the hypothesis on $G$.

\vskip2mm
The contradiction in both cases shows that $O^{q'}(G)$ stabilizes each $S_i$.  

\vskip2mm
Fix $i\in \{1,\ldots,k\}$ and fix a $q$-Sylow subgroup $Q$ of $G$.   For every prime divisor $l$ of $|S_i|$, other than $p$, there is an $l$-element $y_l\in S_i$ which is vanishing in $G$. Hence $C_G(y_l)\supseteq Q'$ for some Sylow $q$-subgroup $Q'$ of $G$. Since all the Sylow $q$-subgroups of $G$ are in $O^{q'}(G)$, we can write $Q'=hQh^{-1}$ for some $h\in O^{q'}(G)$. Hence, $C_G(h^{-1}y_lh)\supseteq Q$, where $h^{-1}y_lh\in S_i$ (since $S_i\trianglelefteq O^{q'}(G)$).  This shows that $Q$ centralizes an $l$-element in $S_i$ for every prime divisor of $|S_i|$ except $p$. 

If the action of $Q$ on $S_i$ is non-trivial, then some $\sigma\in Q$ has a non-trivial and coprime action on $S_i$. From the previous paragraph, $|C_{S_i}(\sigma)|$ is divisible by all the primes dividing  $|S_i|$, except possibly one (namely $p$); this is a contradiction by Lemma \ref{lem2.9}. Thus $Q$ acts trivially on $S_i$ for every $i$, i.e. $C_G(M)\supseteq Q$. Since $M\trianglelefteq G$, $C_G(M)\trianglelefteq G$, hence $C_G(M)\supseteq O^{q'}(G)\supseteq M$, a contradiction (since $M$ is non-abelian).  This proves Claim 2. 

\vskip3mm 
We now prove that $O^{q'}(G)$ is solvable. Let $M$ be a minimal normal subgroup of $G$ with $M\subseteq O^{q'}(G)$. Then by above claims, $M$ is abelian, and $O^{q'}(G/M)$ is solvable. Since $O^{q'}(G/M)=O^{q'}(G)/M$, it follows that $O^{q'}(G)$ is solvable. 
\end{proof}

 \bibliographystyle{plain}  
 \bibliography{biblio}   
\vskip2mm\noindent 
\textsuperscript{\textdagger}\textsc{Sonakshee Arora}, Department of Mathematics, Indian Institute of Technology Jammu, Jagti, NH-$44$, PO Nagrota, Jammu-$181221$, J$\&$K,  India. 

\noindent Email: \texttt{\color{blue}sonakshee.arora@iitjammu.ac.in}
\vskip2mm\noindent 
\textsuperscript{$\ddagger$}\textsc{Rahul Dattatraya Kitture}, Department of Mathematics, Indian Institute of Technology Jammu, Jagti, NH-$44$, PO Nagrota, Jammu-$181221$, J$\&$K, India. 

\noindent Email: \texttt{\color{blue}rahul.kitture@iitjammu.ac.in}

\end{document}